\theoremstyle{plain}
\newtheorem{theorem}{Theorem}
\newtheorem{corollary}[theorem]{Corollary}
\newtheorem{lemma}[theorem]{Lemma}
\newtheorem{proposition}[theorem]{Proposition}
\newtheorem{conjecture}[theorem]{Conjecture}
\theoremstyle{definition}
\newtheorem{definition}[theorem]{Definition}
\newtheorem{example}[theorem]{Example}
\newtheorem{note}[theorem]{Note}
\newtheorem{remark}[theorem]{Remark}
\begin{document}

\title{Cross-Frame Potential}
\author{Roza Aceska\thanks{Department of Mathematical Sciences, Ball State University, raceska@bsu.edu }   \and McKenna Kaczanowski\thanks{Department of Mathematics, Indiana  University,  mckennakaczanowski@gmail.com }  }


\maketitle
\begin{abstract}
 We study  
 a cross-frame potential function, which is closely tied to the cross correlation between a frame and its dual. \color{black}
 We analyze the behavior of this new function to determine what information the cross-frame potential value can reveal about the relationship between the two frames. We also define a cross frame potential function for fusion frames, and study its properties for special types of fusion frames. 
\end{abstract}

\vspace{2.3mm}

{\bf Keywords:} Finite frame, Canonical dual frame, Cross-frame potential 

\vspace{2.3mm}

{\bf AMS subject classification:}  42C15, 42C99

\section{Introduction}


 
Frames \cite{CassFin}
have  properties similar to 
bases, but in general they offer more flexi-bility to accommodate specific
design requirements, \color{black} and their {redundancy}   allows for protection against information loss in data transmission. 
In the past few decades frame representations of vectors have been  studied  by an army of   mathematicians and applied scientists, interested in their mathematical properties and in the advantages they offer in numerous applications \cite{Jelena,  Groechenig}.

{\it Fusion frames} as generalizations of frames were first introduced in \cite{Casazza2004} as  weighted sequences of
subspaces with controlled  {overlaps}. These redundant subspaces 
ease the construction of frames by building them locally 
and then piecing the local frames together by employing a special structure of the set of subspaces.  
The properties and applications of fusion frames (useful in distributed processing) were further explored in \cite{ CasazzaGitta,Calderbank2011, AceskaBouchot} and references within, while  dual fusion frames were  studied in detail in \cite{Heineken}.

The {\it frame potential } function \cite{CasazzaGitta} takes a frame as input and returns its “frame potential value”, which is simply  the $\ell^2$ sum of the inner products between the frame vectors. This value  
relates the geometric structure of the input frame to optimal  properties with respect to frame potential \cite{BodmannHaas}. \color{black}
The concept was first studied in \cite{BenedettoF} where the question of minimizing the frame potential was initially answered - 
 the potential is minimized when the input frame has specific geometry. Further results in this direction can be found  in \cite{BodmannHaas}, where alternative definitions and generalizations of the notion are also pursued. For a study of the frame potential function in finite-dimensional Banach spaces see \cite{ChavezDominguez}. 

   The notion of {\it fusion frame potential} was introduced in \cite{fusionPotential}, where the authors show that if  the fusion frame’s subspaces are large in number but small in dimension compared to the dimension of the underlying
space, then fusion frames will always exist, with each being a minimizer of the fusion frame potential. This topic is further explored in \cite{Massey}; the authors
related this problem to the index of the Hadamard product by positive matrices and use it to give different characterizations of the minimization of the fusion frame potential. In \cite{HeinekenFFP}, the authors provide a description of the local  minimizers which projections are eigenoperators of the fusion frame operator.

 
In this paper we study several ways of evaluating the   cross correlation   between two frames in the finite-dimensional case. The {\it  cross frame potential} is defined  as
\color{black} 
\begin{equation}\label{ourdefCFP}    P_F(G) = \sum_{i=1}^k\sum_{j=1}^k|\langle f_i,g_j \rangle |^2.\end{equation} Observe that \eqref{ourdefCFP} is an adaptation of the classical frame potential function (De- finition~\ref{framepotentialdef}) and it takes the   potential of one frame $F$ with respect to another frame $G$ (one of its duals; Example~\ref{notdualframe1}  points out that  the cross frame potential of a  frame with nonduals will sometimes attain unusual values). This function is minimized for the canonical dual (Section~\ref{crossframepotdefs}). 

 Formula \eqref{ourdefCFP} \color{black}  is naturally related to the {\it  cross-Gramian}  of a frame $F$ with its dual frame $G$, $Gr(F,G) = \left(\langle f_i, g_j \rangle \right)_{i,j=1}^k$, which in general is not a symmetric matrix (Example~\ref{exNonSymmetricGram}).  
Note that the mixed-frame potential function $ \Tilde{FP} (F,G) $ as studied in \cite{HeineMixed}    measures  how close a frame pair is to a biorthogonal system, and it \color{black}   coincides in value with \eqref{ourdefCFP} in a special case:  
In the real domain 
for frame pairs with symmetric cross Grammians (such as the frame pairs in Example~\ref{GrassmannEx} and Example~\ref{exNonSymmetricGram}),
%
%
\begin{center}
{\it  $P_F (G) = \tilde{FP} (F,G)$ when $Gr(F,G)^T= Gr(F,G)$}, \end{center}
%
%
which holds  for the canonical dual (as  in \color{black} Proposition~\ref{generalresult}).   Thus,  the smaller the value of \eqref{ourdefCFP} is, the closer the frame pair $(F, G)$ is to a biorthogonal system. \color{black}

\color{black}



Motivated by the various ways of assessing frame potential in \cite{BodmannHaas} and what its value says about the quality of the frame at hand, we extend the ideas of {\it $p-$th frame potential}  and {\it exponential potential} to frame pairs in Subsection~\ref{crossPpotential} and Section~\ref{exponPot}, where we highlight the special values for the cross frame potential with the canonical dual. In Section~\ref{exponPot} we also point to some unanswered questions regarding the off-diagonal maximal magnitude and state a couple of conjectures, which can be seen as generalizations of results on the Gramian \cite{Welch, Kasso}.

We   introduce the notion of cross potential for fusion frames in Section~\ref{CFFPdef},  where we  generalize the idea of fusion frame potential 
as discussed in 
\cite{fusionPotential}, and we obtain exact computations of the cross potential for special types of fusion frame pairs.

\section{Background } 
By $\mathbb{H}=\mathbb{F}^n$  we denote a  finite-dimensional Hilbert space ($\mathbb{R}^n$ or $\mathbb{C}^n$). 

\begin{definition} \label{framedefinition}
A sequence of vectors $F=\{f_i\}_{i=1}^k$, with $k \geq n$, in an $n$-dimensional Hilbert space $\mathbb{H}$ is a \textit{frame for $\mathbb{H}$} if   there exist  real constants $0 < A \leq B < +\infty$ such that  
\begin{equation} A\lVert f \rVert ^2 \leq \sum_{i=1}^k|\langle f,f_i\rangle|^2 \leq B \lVert f \rVert ^2\end{equation} for every $f \in \mathbb{H}$. 
$A$ is called the lower frame bound for $\{f_i\}_{i=1}^k$, and $B$ is called the upper frame bound.
\end{definition}
In a  finite-dimensional space $\mathbb{H}$, frames are simply  spanning sets of $\mathbb{H}$.
All bases are frames by definition, but frames in general are not bases. 
 
\begin{example} \label{frameExample1}
$F = \Bigg \{\begin{bmatrix}
0 \\
1  
\end{bmatrix},
\begin{bmatrix}
1 \\
1 
\end{bmatrix},
\begin{bmatrix}
-1 \\
1
\end{bmatrix} \Bigg \}$ is a frame for $\mathbb{R}^2$, but 
not a basis. 
\end{example} 
 
%
%

Each frame has an associated analysis, synthesis, and frame operator.
Given a frame $F = \{f_i\}_{i=1}^k$ for $\mathbb{F}^n$,  
the   {\it analysis operator} \color{black} $\theta_F$ 
is defined for every $f \in \mathbb{H}$ by 
$( \theta_F f)_i = \langle f,f_i \rangle$;   
the matrix of $\theta_F$ has the frame elements of $F$ as its rows. 
 The \textit{synthesis operator}   is the adjoint 
 operator of 
 $\theta_F$,  indicated by $\theta^*_F$ \color{black};  its matrix has the  elements of frame $F$ as columns, and we will label this matrix by $F$ for simplicity.  
%
 %
The \textit{frame operator} of  $F = \{f_i\}_{i=1}^k$ is 
defined as $ S = \theta^*_F\theta_F$. 
It shows that the frame operator is a positive, self-adjoint, invertible operator. 



In signal processing, a frame is  used to compute the analysis coefficients, while a {\it dual frame} is used for synthesis, to reconstruct the signal (or the other way around). 
\begin{definition}
Let $\{f_i\}_{i=1}^k$ be a frame for $\mathbb{H}$. A \textit{dual frame} for $\{f_i\}_{i=1}^k$ is a frame $\{g_i\}_{i=1}^k$ such that for every $f \in \mathbb{H}$, \begin{equation*}
    f = \sum_{i=1}^k\langle f,g_i \rangle f_i = \sum_{i=1}^k\langle f,f_i \rangle g_i.
\end{equation*}
\end{definition}
%
Thus, 
a frame $G = \{g_i\}_{i=1}^k$  is a dual frame for $F$ if $\theta_F^*\theta_G = \theta_G^*\theta_F$ is the identity map on $\mathbb{H}$; that is, $FG^T=GF^T=I_{n\times n}$.
 

A frame can have many dual frames. 
The \textit{canonical dual frame} for a frame $F = \{f_i\}_{i=1}^k$ for $\mathbb{H}$ with frame operator $S$ is the frame $\tilde{F} = \{S^{-1}f_i\}_{i=1}^k$. 
The frame operator for $\tilde{F}$ is $S^{-1}$.
The canonical dual frame has a minimization property:

\begin{lemma}\label{minimizationlemma} 
Let $F = \{f_i\}_{i=1}^k$ be a frame for $\mathbb{H}$, let $\tilde{F} = \{\tilde{f}_i\}_{i=1}^k$ be the canonical dual frame for $F$, and let $H = \{h_i\}_{i=1}^k$ be a dual frame (not necessarily the canonical dual) for $F$. Then for every $f \in \mathbb{H}$, $$\sum_{i=1}^k|\langle f,\tilde{f}_i \rangle|^2 \leq \sum_{i=1}^k|\langle f,h_i \rangle|^2,$$ with equality if and only if $H = \tilde{F}$.
\end{lemma}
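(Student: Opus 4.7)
The plan is to parametrize every dual frame as a perturbation of the canonical dual and to show that this perturbation is ``orthogonal'' to the canonical dual coefficients in the $\ell^2$ sense, so that the two contributions decouple cleanly under $\sum_i |\langle f,\cdot\rangle|^2$. Concretely, I would set $v_i := h_i - \tilde{f}_i$, so that $H = \tilde{F} + V$. Since both $H$ and $\tilde{F}$ satisfy the reconstruction identity $f = \sum_{i=1}^k \langle f, g_i\rangle f_i$, subtracting the two equalities yields the key redundancy relation $\sum_{i=1}^k \langle f, v_i\rangle\, f_i = 0$ for every $f \in \mathbb{H}$. This single structural fact is what drives the whole argument.

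Next, I would use $\tilde{f}_i = S^{-1} f_i$ together with the self-adjointness of $S^{-1}$ to rewrite $\langle f, \tilde{f}_i\rangle = \langle f_i, S^{-1} f\rangle$, and then expand
\[
\sum_{i=1}^k |\langle f, h_i\rangle|^2 = \sum_{i=1}^k |\langle f, \tilde{f}_i\rangle|^2 \;+\; 2\,\operatorname{Re}\sum_{i=1}^k \overline{\langle f, \tilde{f}_i\rangle}\,\langle f, v_i\rangle \;+\; \sum_{i=1}^k |\langle f, v_i\rangle|^2.
\]
Pulling the sum through the inner product, the middle term equals $2\,\operatorname{Re}\bigl\langle \sum_{i=1}^k \langle f, v_i\rangle f_i,\; S^{-1} f \bigr\rangle$, which vanishes by the redundancy relation. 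This yields the desired inequality with the explicit nonnegative gap $\sum_{i=1}^k |\langle f, v_i\rangle|^2$.

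For the equality clause, I would observe that if equality holds for every $f \in \mathbb{H}$, then $\langle f, v_i\rangle = 0$ for all $f$ and all $i$, forcing $v_i = 0$ and hence $H = \tilde{F}$; the converse is trivial. I do not anticipate a serious obstacle: the only substantive step is recognizing that the cross term is a pairing of the ``zero'' vector $\sum_i \langle f, v_i\rangle f_i$ against $S^{-1} f$, which hinges on commuting $\sum_i$ past the inner product and on the self-adjointness of $S^{-1}$. Everything else is routine expansion, and the argument goes through identically over $\mathbb{R}$ and $\mathbb{C}$.
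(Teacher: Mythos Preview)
Your argument is correct and is in fact the standard proof of this classical minimization property of the canonical dual. Note, however, that the paper does not supply its own proof of Lemma~\ref{minimizationlemma}: the lemma appears in the Background section as a known result, stated without demonstration, and is later invoked to deduce Theorem~\ref{mainresult}. So there is no ``paper's proof'' to compare against; your proposal simply fills in a gap the authors intentionally left.

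One small point of care: the identity you write as $\langle f,\tilde f_i\rangle = \langle f_i, S^{-1}f\rangle$ is only literally true over $\mathbb{R}$; in the complex case self-adjointness of $S^{-1}$ gives $\langle f,\tilde f_i\rangle = \langle S^{-1}f, f_i\rangle$, and it is the conjugate $\overline{\langle f,\tilde f_i\rangle} = \langle f_i, S^{-1}f\rangle$ that you actually need in the cross term. Your subsequent computation of the middle term as $2\operatorname{Re}\bigl\langle \sum_i \langle f,v_i\rangle f_i,\, S^{-1}f\bigr\rangle$ is nonetheless correct, so this is only a cosmetic slip in the intermediate line, not a genuine gap.
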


The {\it cross-Gramian } $Gr(F,G)=\theta_F\theta_G^*$ is a bounded, idempotent 
operator on $\mathbb{F}^n$ whenever $G$ is a dual frame of $F$; it is self-adjoint if $G$ is the canonical dual frame of $F$.   The matrix of $Gr(F,G)$ is $F^TG$; thus  $Gr(F,G)^*= Gr(G,F)$.
 
A frame    is called \textit{tight} if   $A=B$ in Definition \ref{framedefinition}. One useful feature of tight frames is that its frame operator $S = \theta^*\theta$ 
is a scalar multiple of the identity operator, where that scalar is the frame bound $A=B$. Thus, the canonical dual frame   of a tight frame $F = \{f_i\}_{i=1}^k$,  is simply 
$\{\frac{1}{A}f_i\}_{i=1}^k$. When a frame $F$ is tight, the representation of a vector $f$ in $\mathbb{H}$ depends only on $F$, 
that is, we only need $F$ for the analysis and synthesis. 
\begin{example} \label{tightFrameExample}
$$F = \Bigg\{ \begin{bmatrix}
0 \\
1
\end{bmatrix},
\begin{bmatrix}
-\sqrt{3}/2 \\
-1/2
\end{bmatrix},
\begin{bmatrix}
\sqrt{3}/2 \\
-1/2
\end{bmatrix} \Bigg\}$$
is a tight frame for $\mathbb{R}^2$, 
with frame operator   $S=\frac{3}{2}I_2$; its canonical dual is $\frac{2}{3}F$.
\end{example}


 
\subsection{Frame potential} 
The frame potential of a finite sequence of vectors is a theoretical version of potential energy in physics developed in \cite{BenedettoF} and later studied in \cite{BodmannHaas, ChavezDominguez, Massey}. 
In physics, a system of objects acting under a force will move to minimize its potential energy.   
\begin{definition} \label{framepotentialdef}
The \textit{frame potential} of a frame $F = \{f_i\}_{i=1}^k$ for $\mathbb{H}$, $P_F$, is $$P_F = \sum_{i=1}^k\sum_{j=1}^k|\langle f_i,f_j \rangle|^2.$$
\end{definition}
The frame potential function   can also be written as the trace of the square of the frame operator: $P_F = tr(S^2).$

\begin{proposition}  
\label{framepotentialvalue}
Let $F = \{f_i\}_{i=1}^k$ be a sequence of vectors in an n-dimensional Hilbert space $\mathbb{H}$ with $\sum_{i=1}^k\lVert f_i \rVert ^2 = L$. Then we have
\begin{equation*} P_F \geq \frac{L^2}{n},\end{equation*} with equality if and only if $F$ is a tight frame. 
\end{proposition}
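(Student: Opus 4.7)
The plan is to rewrite the frame potential in terms of spectral data of the frame operator and then apply a standard Cauchy--Schwarz / power-mean inequality to the eigenvalues. Concretely, I would start from the identity $P_F = \mathrm{tr}(S^2)$ noted right after Definition~\ref{framepotentialdef}, and couple it with the elementary computation $\mathrm{tr}(S) = \sum_{i=1}^k \langle f_i, f_i\rangle = \sum_{i=1}^k \|f_i\|^2 = L$, which follows from $S = \theta_F^* \theta_F$ and the fact that the $(i,i)$-entry of $\theta_F \theta_F^*$ is $\|f_i\|^2$.

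Next, since $S$ is self-adjoint and positive semi-definite on the $n$-dimensional space $\mathbb{H}$, I would diagonalize it and denote its eigenvalues by $\lambda_1, \ldots, \lambda_n \geq 0$. Then the two trace formulas above become
\begin{equation*}
\sum_{i=1}^n \lambda_i = L, \qquad \sum_{i=1}^n \lambda_i^2 = P_F.
\end{equation*}
Applying the Cauchy--Schwarz inequality to the vectors $(\lambda_1,\ldots,\lambda_n)$ and $(1,\ldots,1)$ gives $L^2 = \bigl(\sum_i \lambda_i\bigr)^2 \le n \sum_i \lambda_i^2 = n \, P_F$, i.e.\ $P_F \ge L^2/n$, which is the desired bound.

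For the equality characterization, I would use the standard fact that equality in Cauchy--Schwarz forces $(\lambda_1,\ldots,\lambda_n)$ to be proportional to $(1,\ldots,1)$, so all eigenvalues are equal to some common value $\lambda = L/n$. Thus $S = (L/n)\, I$, which is exactly the condition that $F$ be a tight frame with frame bound $A = B = L/n$ (noting that if $L>0$ the operator $S$ is a positive multiple of the identity and hence $F$ genuinely spans $\mathbb{H}$, so it is a frame in the sense of Definition~\ref{framedefinition}). Conversely, if $F$ is tight, $S = A\, I$, so $L = nA$ and $P_F = nA^2 = L^2/n$.

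There is no serious obstacle here; the main thing to be careful about is making sure we are allowed to talk about the spectrum of $S$ even when $F$ is a priori only a sequence of vectors (not necessarily a frame), which is fine because $S$ is self-adjoint and positive semi-definite regardless, and the inequality $P_F \ge L^2/n$ holds in that generality with the equality case forcing $F$ to actually be a (tight) frame.
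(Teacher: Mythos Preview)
Your argument is correct and is precisely the standard spectral proof of this result. Note, however, that the paper does not actually supply its own proof of Proposition~\ref{framepotentialvalue}; it is quoted as a background fact (originating in \cite{BenedettoF}) and used without demonstration. So there is nothing to compare against beyond observing that your approach---rewriting $P_F=\mathrm{tr}(S^2)$ and $L=\mathrm{tr}(S)$ in terms of the eigenvalues of $S$ and applying Cauchy--Schwarz---is exactly the classical one. Your handling of the equality case and of the edge situation where $F$ is a priori only a sequence (not yet known to be a frame) is also fine.
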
 
Note that the tight frame $F$ from  Example \ref{tightFrameExample}  
has frame potential $P_F = 9/2$ which is the expected minimum value  (as $L = 1 + 1 + 1 = 3$). 
The  frame $F$ from Example \ref{frameExample1}  has frame potential $P_F = 13$, which is greater than the minimum value $25/2$ (as expected for non-tight frames).

\begin{proposition}\cite{BodmannHaas}\label{BHdiag}
Let $G=Gr(F,F)$ be the Grammian for a Parseval frame $F$ for $\mathbb{F}^n$, 
 consisting of $k$ vectors. Then 
$$\sum_{i=1}^k |G_{i,i}|^2 \geq \frac{n^2}{k},$$
with equality if and only if $G_{i,i} = \frac{n}{k}$ for each $1 \leq i \leq k$.
\end{proposition}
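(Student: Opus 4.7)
The plan is to exploit two facts about a Parseval frame: first, that the Gramian $G = Gr(F,F) = \theta_F\theta_F^*$ is an orthogonal projection of rank $n$; second, that its diagonal entries are real and non-negative. These together will reduce the inequality to a one-line Cauchy--Schwarz.

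First I would unpack $G_{i,i}$. Since $F$ is Parseval, the frame operator $S = \theta_F^*\theta_F$ equals the identity on $\mathbb{F}^n$, so $G^2 = \theta_F(\theta_F^*\theta_F)\theta_F^* = \theta_F\theta_F^* = G$ and $G^* = G$. Hence the diagonal entries are $G_{i,i} = \langle f_i, f_i\rangle = \|f_i\|^2 \geq 0$, and $|G_{i,i}|^2 = G_{i,i}^2$. Next I would compute their sum: $\sum_{i=1}^k G_{i,i} = \mathrm{tr}(G) = \mathrm{tr}(\theta_F\theta_F^*) = \mathrm{tr}(\theta_F^*\theta_F) = \mathrm{tr}(I_n) = n$.

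With these in hand, the inequality is immediate from Cauchy--Schwarz applied to the vectors $(1,1,\dots,1)$ and $(G_{1,1},\dots,G_{k,k})$ in $\mathbb{R}^k$:
\begin{equation*}
n^2 \;=\; \Bigl(\sum_{i=1}^k G_{i,i}\Bigr)^{\!2} \;\leq\; k \sum_{i=1}^k G_{i,i}^2 \;=\; k\sum_{i=1}^k |G_{i,i}|^2,
\end{equation*}
giving $\sum_{i=1}^k |G_{i,i}|^2 \geq n^2/k$. Equality in Cauchy--Schwarz holds exactly when all $G_{i,i}$ are equal, and combined with the trace constraint $\sum G_{i,i} = n$ this forces $G_{i,i} = n/k$ for every $i$, which is the stated equality condition.

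There is no real obstacle here; the only subtle point to check is that tightness of the frame (Parseval) is precisely what makes $G$ a projection and forces $\mathrm{tr}(G) = n$, so that the right-hand side of Cauchy--Schwarz evaluates to $n^2$. If I wanted to phrase the argument more abstractly, I could note that $\sum |G_{i,i}|^2$ is the squared $\ell^2$-norm of the diagonal, while $\sum G_{i,i} = n$ is its $\ell^1$-norm, and the inequality between $\ell^1$ and $\ell^2$ norms on $\mathbb{R}^k$ (with constant $\sqrt k$) yields exactly the bound.
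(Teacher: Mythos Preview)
Your proof is correct. The paper does not itself prove this particular proposition (it is cited from Bodmann--Haas as background), but it does prove the analogous cross-Gramian version (Proposition~\ref{prop29}) by exactly the same mechanism: the Gramian is idempotent so its trace equals its rank $n$, and then Jensen's inequality for $x\mapsto x^2$ (equivalently, your Cauchy--Schwarz with the all-ones vector) yields the bound with the stated equality condition.
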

\vspace{2.13mm}

Other ways to compute  frame potential were explored in \cite{BodmannHaas}:
\begin{definition} 
The $p$-th frame potential of a frame $F = \{f_j\}_{j=1}^k$ for an $n$-dimensional 
Hilbert space $\mathbb{H}$ is given by 
$$\phi_p(F) = \sum_{i,j = 1}^k |\langle f_i,f_j \rangle |^{2p} \; \text{ for some } \; p>0.$$
\end{definition}
\begin{proposition}\cite{BodmannHaas}
If two frames $F$ and $F'$ are unitarily equivalent, then the $p$-th frame potential of $F$ is equal to the $p$-th frame potential of $F'$.
\end{proposition}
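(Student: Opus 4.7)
The plan is to unpack the definition of unitary equivalence and use the inner-product-preserving property of unitary operators, after which the result falls out term by term. Recall that two frames $F=\{f_j\}_{j=1}^k$ and $F'=\{f'_j\}_{j=1}^k$ for $\mathbb{H}$ are unitarily equivalent when there exists a unitary operator $U:\mathbb{H}\to\mathbb{H}$ such that $f'_j = Uf_j$ for every $j$.

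First I would compute a generic inner product between elements of $F'$:
\begin{equation*}
\langle f'_i, f'_j\rangle = \langle Uf_i, Uf_j\rangle = \langle f_i, U^*U f_j\rangle = \langle f_i, f_j\rangle,
\end{equation*}
using $U^*U = I$. Taking absolute values and raising to the $2p$-th power yields
\begin{equation*}
|\langle f'_i, f'_j\rangle|^{2p} = |\langle f_i, f_j\rangle|^{2p}
\end{equation*}
for every pair $(i,j)$.

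Summing over $1\le i,j\le k$ then gives $\phi_p(F') = \phi_p(F)$ directly from the definition of the $p$-th frame potential. There is no real obstacle here: the entire content of the proposition is the invariance of inner products under unitary transformations, which the squared modulus (and hence every real power thereof) inherits. The only thing worth flagging is that the argument works identically whether $\mathbb{H}=\mathbb{R}^n$ or $\mathbb{C}^n$, and for every $p>0$, since we never needed anything beyond term-wise equality of the summands.
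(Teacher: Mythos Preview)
Your proof is correct. The paper does not actually supply its own proof of this proposition---it is cited from \cite{BodmannHaas} without argument---so there is nothing to compare against directly. That said, your approach is identical in spirit to the proof the paper \emph{does} give for the analogous cross-frame-potential statement (Proposition after Example~\ref{notdualframe2}), where the authors compute $\langle Uf_i, Ug_j\rangle = \langle f_i, g_j\rangle$ term by term via $U^*U = I$ and conclude immediately; you have simply specialized that computation to $G=F$ and carried the extra exponent $2p$ along, which changes nothing.
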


\begin{theorem}\label{HaasPotent}
\cite{BodmannHaas} Let $F = \{f_i\}_{i=1}^k$ be a frame for $\mathbb{F}^n$,  
and $\lVert f_i \rVert ^2 = n/k$ for all $i \in \mathbb{Z}_n$, and let $p > 1$. Then 
$$\phi_p (F) = \sum_{i,j=1}^k |\langle f_i,f_j \rangle |^{2p} \geq \frac{n^{2p}(k-1)^{p-1} + n^p(k-n)^p}{(k-1)^{p-1}k^{2p-1}}$$
and equality holds if and only if $F$ is an equiangular Parseval frame.
\end{theorem}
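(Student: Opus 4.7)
The plan is to split the double sum $\phi_p(F)$ into its diagonal contribution and its off-diagonal contribution, compute the diagonal part exactly using the hypothesis on the norms, and lower bound the off-diagonal part via Jensen's inequality combined with Proposition~\ref{framepotentialvalue}.

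First I would observe that on the diagonal we have $|\langle f_i,f_i\rangle|^{2p} = \|f_i\|^{4p} = (n/k)^{2p}$, so summing over $i$ gives the clean value $k\cdot (n/k)^{2p} = n^{2p}/k^{2p-1}$. This is precisely the first term in the claimed bound, since $n^{2p}(k-1)^{p-1}/[(k-1)^{p-1}k^{2p-1}] = n^{2p}/k^{2p-1}$. So the work is to show that the off-diagonal sum is at least $n^p(k-n)^p/[(k-1)^{p-1}k^{2p-1}]$.

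For the off-diagonal part, set $a_{ij} = |\langle f_i,f_j\rangle|^2$ for $i\neq j$. Since $x \mapsto x^p$ is convex for $p>1$, Jensen's inequality applied to the uniform average over the $k(k-1)$ off-diagonal pairs gives
\begin{equation*}
\sum_{i\neq j} a_{ij}^p \;\geq\; k(k-1)\left(\frac{1}{k(k-1)}\sum_{i\neq j} a_{ij}\right)^p \;=\; \frac{1}{(k(k-1))^{p-1}}\left(\sum_{i\neq j} a_{ij}\right)^p.
\end{equation*}
Now the total squared norm is $L = \sum_i \|f_i\|^2 = n$, so Proposition~\ref{framepotentialvalue} gives $\sum_{i,j} a_{ij} \geq L^2/n = n$. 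Subtracting the diagonal, which contributes $k(n/k)^2 = n^2/k$, yields $\sum_{i\neq j} a_{ij} \geq n(k-n)/k$. Plugging this into the Jensen bound and simplifying produces exactly $n^p(k-n)^p/[(k-1)^{p-1}k^{2p-1}]$, which combined with the diagonal term gives the claimed inequality.

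For the equality characterization, two separate equality conditions must be met. Jensen's inequality is tight iff all off-diagonal values $a_{ij}$ are equal, i.e.\ iff $F$ is equiangular. Proposition~\ref{framepotentialvalue} is tight iff $F$ is a tight frame; together with $\|f_i\|^2 = n/k$, tightness forces the frame bound to be $A = \sum_i\|f_i\|^2/n = 1$, so $F$ is Parseval. Thus equality holds iff $F$ is an equiangular Parseval frame. The main subtlety worth double-checking is that these two equality conditions are jointly attainable (they are, since equiangular Parseval frames exist in the appropriate parameter ranges), and that Proposition~\ref{framepotentialvalue} is applicable even though $F$ is only assumed to be a frame with prescribed norms rather than Parseval a priori — but this is fine since that proposition applies to any sequence of vectors.
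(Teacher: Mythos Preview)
The paper does not supply its own proof of this statement; Theorem~\ref{HaasPotent} is quoted from \cite{BodmannHaas} as a background result. That said, your argument is correct and is precisely the template the paper itself uses when proving the cross-potential analogue, Theorem~\ref{whatsL}: separate the diagonal contribution (computed exactly from the norm hypothesis), apply Jensen's inequality to the off-diagonal block, and bound the resulting $p=1$ sum from below via the relevant potential inequality (Proposition~\ref{framepotentialvalue} here, Theorem~\ref{mainresult} there). So there is nothing to contrast---your route is the standard one and matches the paper's methodology for the parallel result.
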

\subsection{Fusion frames and 
{fusion frame potential} } 
Fusion frames are a generalization of traditional frames that are well-suited for distributed data processing: breaking up the processing job into chunks that different computers execute at the same time and then combining the results; 
the reason for this is that fusion frames are sequences of subspaces instead of vectors. Because of this, we can think of fusion frames as frames made up of frames for the subspaces.    
\begin{definition} \label{fusion_def} 
A sequence of closed subspaces $\{W_i\}_{i=1}^k$ of an $n$-dimensional Hilbert space $\mathbb{H}$ is a \textit{fusion frame for $\mathbb{H}$} if there exist constants $0 < A \leq B < \infty$ such that for every $f \in \mathbb{H}$ it holds
$$A\lVert f \rVert ^2 \leq \sum_{i =1}^k{\lVert P_i f \rVert ^2} \leq B\lVert f \rVert ^2, $$   where $P_i$ is the orthogonal projection onto subspace $W_i$ for every $i \in I=\{1,2, \hdots, k\}$. 
A fusion frame is called \textit{tight} if $A = B$. 
 
\end{definition}

We can  view a fusion frame $\{W_i\}_{i=1}^k$ as  a collection $\{P_i\}_{i=1}^k$ of orthogonal projections onto the respective subspaces.
The fusion frame operator $S$ of a frame $\{P_i\}_{i=1}^k$ is the sum of the projections: $$S = \sum_{i=1}^k{P_i}.$$
%
Given a finite sequence of subspaces $\{W_i\}_{i \in I}$ of $\mathbb{H}$, if $\{f_i^j\}_{j \in J_i}$   is a frame for subspace $W_i$ for an index set $J_i$, $i \in I$, then the union of the vectors in every $\{f_i^j\}_{j \in J_i}$ is a  frame for $\mathbb{H}$ if and only if $\{W_i\}_{i=1}^k$ is a fusion frame for $\mathbb{H}$ \cite{CasazzaGitta}.

\begin{example}
Let $W_1=\{(x,y,0)^T | x, y \in \mathbb{R}\}$, $W_2=\{(x,y,y)^T | x, y \in \mathbb{R}\}$. Then $P = \{W_1,W_2\}$ is a fusion frame for $\mathbb{R}^3$. We can also view $P$ as the sequence $\{P_1,P_2\}$ of orthogonal projections onto $W_1$ and $W_2$, where 
$$P_1 = \begin{bmatrix}
1 & 0 & 0 \\
0 & 1 & 0 \\
0 & 0 & 0 
\end{bmatrix}, P_2 = \begin{bmatrix}
1 & 0 & 0 \\
0 & \frac{1}{2} & \frac{1}{2} \\
0 & \frac{1}{2} & \frac{1}{2} 
\end{bmatrix}, \; \text{ while} \; S = P_1 + P_2 = \begin{bmatrix}
2 & 0 & 0 \\
0 & \frac{3}{2} & \frac{1}{2} \\
0 & \frac{1}{2} & \frac{1}{2}
\end{bmatrix}$$
is the fusion frame operator for $P$. 
We can take any frame $F_1$ for $W_1$ and any frame $F_2$ for $W_2$, and the union $F = F_1\cup F_2$   will be a traditional frame for $\mathbb{R}^3$. For example, let 
$$F_1 = \Bigg\{ \begin{bmatrix}
1 \\
0 \\
0 
\end{bmatrix},
\begin{bmatrix}
0 \\
1 \\
0
\end{bmatrix}, \begin{bmatrix}
2 \\
-1 \\
0
\end{bmatrix} \Bigg\}, F_2 = \Bigg\{ \begin{bmatrix}
0 \\
1 \\
1 
\end{bmatrix},
\begin{bmatrix}
1 \\
0 \\
0
\end{bmatrix} \Bigg\}.
$$
Then $F_1 \cup F_2$ is a traditional frame for $\mathbb{R}^3$.
\end{example}
 
\begin{definition}\cite{fusionPotential}
The fusion frame potential  of a fusion frame $P = \{P_i\}_{i=1}^k$, where $P_i$ are orthogonal projections, is $$FFP(P) = Tr(\sum_{i=1}^k{P_i})^2.$$ 
Equivalently, $$FFP(P) = \sum_{i=1}^k\sum_{j=1}^k Tr(P_iP_j).$$
\end{definition}
Similarly to traditional frame potential, fusion frame potential is minimized exactly when the fusion frame is tight. 
\begin{proposition}\label{fusionframepotentialvalue}\cite{fusionPotential}
Let $P = \{P_i\}_{i=1}^k$ be a sequence of orthogonal projections $P_i: \mathbb{F}^n \rightarrow \mathbb{F}^n$ with $Tr(P_i) = L_i$, where $\mathbb{F} = \mathbb{R}$ or $\mathbb{C}$. Then $$FFP(P) \geq \frac{1}{n}(\sum_{i=1}^k L_i)^2,$$ with equality if and only if $\{P_i\}_{i=1}^k$ is a tight fusion frame. 
\end{proposition}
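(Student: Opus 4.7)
The plan is to relate $FFP(P)$ to the trace and squared trace of the fusion frame operator $S=\sum_{i=1}^k P_i$, and then conclude by Cauchy--Schwarz applied to the spectrum of $S$. By the equivalent form in the definition, $FFP(P)=\sum_{i,j}\mathrm{Tr}(P_iP_j)=\mathrm{Tr}(S^2)$, while the trace is additive, so $\mathrm{Tr}(S)=\sum_{i=1}^k\mathrm{Tr}(P_i)=\sum_{i=1}^k L_i$. Thus the desired inequality becomes
\begin{equation*}
\mathrm{Tr}(S^2)\;\geq\;\frac{(\mathrm{Tr}(S))^2}{n}.
\end{equation*}

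Next I would exploit that each $P_i$ is an orthogonal projection, hence positive and self-adjoint; any finite sum of positive self-adjoint operators is itself positive self-adjoint, so $S$ is diagonalizable with real (non-negative) eigenvalues $\lambda_1,\dots,\lambda_n$. Then $\mathrm{Tr}(S)=\sum_{j=1}^n\lambda_j$ and $\mathrm{Tr}(S^2)=\sum_{j=1}^n\lambda_j^2$. Applying the Cauchy--Schwarz inequality to the vectors $(\lambda_1,\dots,\lambda_n)$ and $(1,\dots,1)$ in $\mathbb{R}^n$ gives
\begin{equation*}
\Bigl(\sum_{j=1}^n L_j\Bigr)^2=\Bigl(\sum_{j=1}^n\lambda_j\Bigr)^2\leq n\sum_{j=1}^n\lambda_j^2=n\,\mathrm{Tr}(S^2),
\end{equation*}
which rearranges to the claimed bound.

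Finally, for the equality characterization, Cauchy--Schwarz is tight exactly when $(\lambda_1,\dots,\lambda_n)$ is a scalar multiple of $(1,\dots,1)$, i.e.\ all eigenvalues of $S$ coincide with some common value $c\geq 0$. Since $S$ is self-adjoint, this is equivalent to $S=cI$, and by Definition~\ref{fusion_def} the condition $\langle Sf,f\rangle=\sum_i\|P_if\|^2=c\|f\|^2$ for all $f$ is precisely the statement that $\{P_i\}_{i=1}^k$ is a tight fusion frame (with $A=B=c$). The main step where one must be a little careful is the equivalence between ``$S$ is a scalar multiple of the identity'' and ``tight fusion frame''; this is essentially built into Definition~\ref{fusion_def} via polarization, but should be noted explicitly. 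Otherwise this is a direct transcription of the classical argument behind Proposition~\ref{framepotentialvalue} to the operator-valued setting.
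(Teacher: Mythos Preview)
The paper does not supply its own proof of this proposition; it merely cites \cite{fusionPotential}. Your argument is the standard one and is correct: rewrite $FFP(P)=\mathrm{Tr}(S^2)$, note $\mathrm{Tr}(S)=\sum_i L_i$, and apply Cauchy--Schwarz to the eigenvalues of the positive self-adjoint operator $S$, with equality exactly when $S$ is a scalar multiple of the identity, i.e.\ the fusion frame is tight.

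One small notational slip to fix: in the displayed inequality you wrote $\bigl(\sum_{j=1}^n L_j\bigr)^2$, but the $L_i$ are indexed by the projections, so this should read $\bigl(\sum_{i=1}^k L_i\bigr)^2$; the equality with $\bigl(\sum_{j=1}^n \lambda_j\bigr)^2$ is of course what you intend, since both equal $(\mathrm{Tr}\,S)^2$. You might also note that in the equality case the common eigenvalue $c$ is in fact strictly positive whenever at least one $L_i>0$ (since $S\ge 0$ and $\mathrm{Tr}\,S>0$), so that the tightness condition $A=B=c>0$ in Definition~\ref{fusion_def} is genuinely met.
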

In Proposition \ref{fusionframepotentialvalue}, each $L_i$ is also the rank of $P_i$ since the trace of an orthogonal projection is equal to its rank - which means: $L_i = \dim W_i$,  $1\leq i\leq k$. 

\subsubsection{On computing the canonical dual of a fusion frame}

Viewing a fusion frame $P$ as a sequence of subspaces $\{W_i\}_{i=1}^k$, the canonical dual frame $Q$ is $\{S^{-1}W_i\}_{i=1}^k$, where $S$ is the fusion frame operator for $P$. This  makes sense, since it is the same process we followed to compute the canonical dual of a traditional frame. However, if the sequence of orthogonal projections onto each subspace is $\{P_i\}_{i=1}^k$, then $\{S^{-1}P_i\}_{i=1}^k$  will not necessarily  give us a fusion frame, since 
the products $S^{-1}P_i$ are not necessary orthogonal projections. 

In practice, when we want to  state the canonical dual fusion frame as a sequence of orthogonal projections $\{Q_i\}_{i=1}^k$, 
we first compute the subspaces $S^{-1}W_i$ of $Q$ and then compute the orthogonal projection operators $Q_i$ for   subspaces $S^{-1}W_i$. 

\section{Cross frame potential}\label{crossframepotdefs}
We build on the notion of frame potential function (Definition~\ref{framepotentialdef}) to study the potential of one frame \textit{with respect to} another frame. We call this new function the {\it cross frame potential} function. 
\begin{definition}Let $F = \{f_i\}_{i=1}^k$ and $G = \{g_i\}_{i=1}^k$ be frames for $\mathbb{F}^n$. The \textit{cross frame potential} $P_F(G)$ of $G$ with respect to $F$ is 
\begin{equation} P_F(G) = \sum_{i=1}^k\sum_{j=1}^k|\langle f_i,g_j \rangle |^2.\end{equation}
\end{definition}
By definition, $P_F(G)=P_G(F)$. Clearly, the cross-frame potential of a frame $F$ with itself is just the frame potential of $F$ as per Definition~\ref{framepotentialdef}. 
For a Parseval frame $F$, Proposition~\ref{framepotentialvalue} implies that $P_F(F) = L^2/n$, which is the minimal value of the frame potential function.  
As the canonical dual frame of a Parseval frame $F$ is $F$, 
and its frame potential is the minimal value, then it makes sense to expect that the cross frame potential function $P_F(G)$ for any frame $F$ is minimized when the input frame $G$ is its canonical dual: 
  
\begin{proposition} \label{generalresult}
Let $F$ be a frame for $\mathbb{F}^n$ and let $\tilde{F}$ be the canonical dual frame for $F$. Then the cross- frame potential of $\tilde{F}$ with respect to $F$ is equal to $n$. That is, $$P_F(\tilde{F}) = n.$$
\end{proposition}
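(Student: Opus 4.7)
The plan is to expand the double sum, substitute $\tilde f_j = S^{-1}f_j$, collapse the inner sum using the defining identity of the frame operator, and finish with a trace calculation. Write $S$ for the frame operator of $F$. First I would rewrite
\[
P_F(\tilde F) \;=\; \sum_{j=1}^k \sum_{i=1}^k \bigl|\langle f_i,\; S^{-1}f_j\rangle\bigr|^2 \;=\; \sum_{j=1}^k \sum_{i=1}^k \bigl|\langle S^{-1}f_j,\; f_i\rangle\bigr|^2.
\]

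For each fixed $j$, I would apply the identity $\sum_{i=1}^k |\langle h, f_i\rangle|^2 = \langle Sh, h\rangle$, which is the standard reformulation $\|\theta_F h\|^2 = \langle h, \theta_F^*\theta_F h\rangle$, with $h = S^{-1}f_j$. The inner sum then telescopes to $\langle S(S^{-1}f_j),\; S^{-1}f_j\rangle = \langle f_j,\; S^{-1}f_j\rangle$, giving
\[
P_F(\tilde F) \;=\; \sum_{j=1}^k \langle f_j,\; S^{-1}f_j\rangle.
\]

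To close the argument, I would use the trace identity $\langle f_j, S^{-1}f_j\rangle = \mathrm{tr}(S^{-1}f_j f_j^*)$ together with the linearity of the trace and the formula $S = \theta_F^*\theta_F = \sum_{j=1}^k f_j f_j^*$ recalled in the background section, obtaining
\[
P_F(\tilde F) \;=\; \mathrm{tr}\Bigl(S^{-1}\sum_{j=1}^k f_j f_j^*\Bigr) \;=\; \mathrm{tr}(S^{-1} S) \;=\; \mathrm{tr}(I_n) \;=\; n.
\]

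I do not anticipate a real obstacle; the only care needed is to keep conventions straight in the complex case, where self-adjointness of $S^{-1}$ is what makes the trace identity apply cleanly. As an independent sanity check, one can observe that the cross-Gramian $Gr(F,\tilde F) = \theta_F\theta_{\tilde F}^* = \theta_F S^{-1}\theta_F^*$ is a self-adjoint idempotent $k\times k$ matrix (hence an orthogonal projection) whose range is $\mathrm{range}(\theta_F)$, an $n$-dimensional subspace of $\mathbb{F}^k$. Then $P_F(\tilde F)$ is its squared Frobenius norm, which for an orthogonal projection equals its trace, which equals its rank $n$, confirming the computation.
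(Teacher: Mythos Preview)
Your proof is correct. The main line of your argument differs from the paper's: you collapse the inner sum directly via the frame-operator identity $\sum_i |\langle h,f_i\rangle|^2 = \langle Sh,h\rangle$ with $h=S^{-1}f_j$, reducing $P_F(\tilde F)$ to $\sum_j \langle f_j,S^{-1}f_j\rangle = \mathrm{tr}(S^{-1}S)=n$. The paper instead works entirely with the cross-Gramian $G=\theta_F\theta_{\tilde F}^*$, observing that $P_F(\tilde F)=\mathrm{Tr}(G^*G)$, proving $G$ is self-adjoint and idempotent, and concluding $\mathrm{Tr}(G^*G)=\mathrm{Tr}(G)=\mathrm{rank}(G)=n$. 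Your ``sanity check'' paragraph is in fact exactly the paper's argument. What your primary route buys is a shorter, purely computational path that avoids any discussion of rank or idempotence; what the paper's route buys is the structural fact that $Gr(F,\tilde F)$ is an orthogonal projection of rank $n$, which is reused later (e.g.\ in the proofs of Proposition~\ref{prop29} and Theorem~\ref{whatsL}).
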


\begin{proof}
Let $G = Gr(F,\tilde{F})=\theta_F\theta_{\tilde{F}}^*$ denote the cross-Grammian of $F$ with $G$. 
 Observe that $P_F(\tilde{F})$ is equal to the sum of the squared entries of $G$, and so 
$$P_F(\tilde{F}) = Tr(G^*G).$$
Since $\tilde{F}$ is the canonical dual for $F$, $G$ is self-adjoint, since $\theta_{\tilde{F}}^* = S^{-1}\theta_F^*$, where $S$ is the frame operator for $F$, and so 
$$G^* = (\theta_F\theta_{\tilde{F}}^*)^* = \theta_{\tilde{F}}\theta_F^* = (S^{-1}\theta_F^*)^*\theta_F^* = \theta_F(S^{-1})^*\theta_F^* = \theta_FS^{-1}\theta_F^* = \theta_F\theta_{\tilde{F}}^* = G,$$ 
as $S$ is an invertible, self-adjoint operator. In addition, because $F$ and $\tilde{F}$ are dual to one another, $\theta_{\tilde{F}}^*\theta_F = I_n$, and so 
\begin{equation}\label{projectionS} 
G^2 = (\theta_F\theta_{\tilde{F}}^*)(\theta_F\theta_{\tilde{F}}^*) = \theta_F(\theta_{\tilde{F}}^*\theta_F)\theta_{\tilde{F}}^* = \theta_FI_n\theta_{\tilde{F}}^* = G.\end{equation}
Thus $G$ is an idempotent matrix, and so its trace is equal to its rank. Therefore,
\begin{equation} \label{rankS}
P_F(\tilde{F}) = Tr(G^*G) = Tr(G) = rank(G).\end{equation}
Since $\theta_{\tilde{F}}^*$ is an $n \times k$ matrix of rank $n$ (the columns of $\theta_{\tilde{F}}^*$ form a frame for $\mathbb{F}^n$), $rank(\theta_F\theta_{\tilde{F}}^*) = rank(\theta_F) = n$, as the rows of $\theta_F$ form a frame for $\mathbb{F}^n$. Thus $P_F(\tilde{F}) = n$ when $\tilde{F}$ is the canonical dual frame for $F$.
\end{proof}

The cross-frame potential   $P_F(H)$ is minimized exactly when $H$ is the canonical dual frame of $F$;   an equivalent result can be found in \cite{ChrisDatta}:  \color{black}

\begin{theorem} \label{mainresult} 
Let $F$ be a frame for $\mathbb{F}^n$, and let $H$ be a dual frame for $F$. Then \begin{equation}P_F(H) \geq n, \end{equation} with $P_F(H) = n$ if and only if $H$ is the canonical dual frame for $F$. 
\end{theorem}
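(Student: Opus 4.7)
The plan is to derive a ``Pythagorean'' decomposition of $P_F(H)$ around the canonical dual and then combine it with Proposition~\ref{generalresult}. Write any dual frame as $H = \tilde{F} + V$, where $V = \{v_i\}_{i=1}^k$ with $v_i = h_i - \tilde{f}_i$. Since $\theta_H^*\theta_F = \theta_{\tilde F}^* \theta_F = I_n$, the difference satisfies $\theta_V^*\theta_F = 0$; equivalently, $\sum_i \langle g, f_i\rangle v_i = 0$ for every $g\in \mathbb{H}$.

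The first step is to expand, for each fixed $j$,
\[
\sum_{i=1}^k |\langle f_j, h_i\rangle|^2 = \sum_{i=1}^k |\langle f_j,\tilde{f}_i\rangle|^2 + 2\operatorname{Re}\sum_{i=1}^k \langle f_j,\tilde{f}_i\rangle\,\overline{\langle f_j,v_i\rangle} + \sum_{i=1}^k |\langle f_j,v_i\rangle|^2.
\]
The middle (cross) sum vanishes: using $\tilde f_i = S^{-1}f_i$, one has $\sum_i \langle f_j,\tilde f_i\rangle v_i = \sum_i \langle S^{-1}f_j, f_i\rangle v_i = \theta_V^*\theta_F (S^{-1}f_j) = 0$, and pairing this identity with $f_j$ kills the cross term. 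Summing over $j$ yields the Pythagorean identity
\[
P_F(H) = P_F(\tilde F) + \sum_{j=1}^k\sum_{i=1}^k |\langle f_j,v_i\rangle|^2.
\]

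The second step plugs in Proposition~\ref{generalresult}, giving $P_F(\tilde F)=n$, so $P_F(H)\ge n$. Equality forces $\langle f_j,v_i\rangle = 0$ for every $i$ and every $j$; since $F$ spans $\mathbb{F}^n$, this forces each $v_i=0$, i.e., $H=\tilde F$. Conversely, if $H=\tilde F$ then equality holds by Proposition~\ref{generalresult}.

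The only delicate point is the vanishing of the cross term, which hinges on recognizing the duality condition $\theta_V^*\theta_F=0$ in the right place; the rest is bookkeeping. An alternative (equally short) route would be to apply Lemma~\ref{minimizationlemma} with $f=f_j$ and sum over $j$ to obtain the inequality, and then recover the equality case either by the same decomposition argument above or by observing that the cross-Grammian $Gr(F,H)$ is an idempotent matrix of rank $n$ whose Frobenius norm squared equals $P_F(H)$, with $\|Gr(F,H)\|_F^2 \ge n$ and equality iff $Gr(F,H)$ is an orthogonal projection, i.e.\ self-adjoint, i.e.\ $H=\tilde F$.
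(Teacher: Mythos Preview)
Your proof is correct. The paper's own argument is precisely the one-line alternative you mention at the end: apply Lemma~\ref{minimizationlemma} with $f=f_j$ for each $j$, sum over $j$, and invoke Proposition~\ref{generalresult} for the value $n$ and the equality case. Your main argument unpacks the standard proof of Lemma~\ref{minimizationlemma}---the Pythagorean decomposition of dual-frame coefficients around the canonical dual, via $\theta_V^*\theta_F=0$---and specializes it to the vectors $f=f_j$, so the two routes have identical content; yours is just more self-contained and makes the equality case fully explicit. The Frobenius-norm observation you add (that $P_F(H)=\lVert Gr(F,H)\rVert_F^2$ with $Gr(F,H)$ a rank-$n$ idempotent, and such a matrix has Frobenius norm squared at least $n$ with equality iff it is self-adjoint) is a clean independent confirmation that does not appear in the paper.
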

 
The proof of Thoerem~\ref{mainresult} follows from  Lemma \ref{minimizationlemma} and  Proposition \ref{generalresult}.
 \color{black}
 
\begin{example}\label{example2revisit} 
The canonical dual frame $G$ of the frame $F$ in  Example \ref{frameExample1} 
is
$$G = \Bigg\{ \begin{bmatrix}
0 \\
\frac{1}{3}
\end{bmatrix},
\begin{bmatrix}
\frac{1}{2} \\
\frac{1}{3} 
\end{bmatrix},
\begin{bmatrix}
-\frac{1}{2} \\
\frac{1}{3}
\end{bmatrix} \Bigg\}, \; \text{while } \; H = \Bigg\{ \begin{bmatrix}
0 \\
1
\end{bmatrix}, \begin{bmatrix}
\frac{1}{2} \\
0
\end{bmatrix}, \begin{bmatrix}
-\frac{1}{2} \\
0
\end{bmatrix} \Bigg\}$$ is another dual frame of $F$. 
Then $P_F(G) = 2$, as expected, and  $P_F(H) = 4 > 2$.
\end{example}

The minimization result of Theorem~\ref{mainresult} only holds when the domain of the cross potential function $P_F$ is limited to dual frames for $F$. We give two examples of cross frame potential computations with non-dual frames that match or fall below the minimum value $n$:
\begin{example}\label{notdualframe1}
Let $F = \{f_i\}_{i=1}^k$ be a frame for $\mathbb{R}^n$, and let $G = \{g_i\}_{i=1}^k$ be its canonical dual. Suppose $\langle f_i,g_1 \rangle \neq 0$ for some $f_i$ in $F$. Then  the frame $H = \{\frac{g_1}{2},g_2,...,g_k\}$ is not a dual frame for $F$. In particular, we have 
\begin{align*} P_F(H) &= \sum_{i=1}^k\left(|\langle f_i,\frac{1}{2}g_1 \rangle|^2 + \sum_{j=2}^k|\langle f_i,g_j \rangle|^2\right) \\&= \frac{1}{4}\sum_{i=1}^k|\langle f_i,g_1 \rangle|^2 + \sum_{i=1}^k\sum_{j=2}^k|\langle f_i,g_j \rangle|^2 \\
&< \sum_{i=1}^k|\langle f_i,g_1 \rangle|^2 + \sum_{i=1}^k\sum_{j=2}^k|\langle f_i,g_j \rangle|^2  = \sum_{i=1}^k\sum_{j=1}^k|\langle f_i,g_j\rangle|^2 = P_F(G) = n.
\end{align*}
So $P_F(H)<n$; $H$ is not a dual frame for $F$  by Theorem   \ref{mainresult}. 
\end{example}
\begin{example}\label{notdualframe2}
Let $F = \{f_i\}_{i=1}^k$ be a frame for $\mathbb{R}^n$, and let $G = \{g_i\}_{i=1}^k$ be its canonical dual, with $g_1 \neq 0$. Then    $H = \{-g_1,g_2,...,g_k\}$ is not a dual frame of $F$. Yet, we have 
\begin{align*} P_F(H) &= \sum_{i=1}^k\left(|\langle f_i,-g_1 \rangle|^2 + \sum_{j=2}^k|\langle f_i,g_j \rangle|^2\right)\\ 
&= \sum_{i=1}^k\left(|\langle f_i,g_1 \rangle|^2 + \sum_{j=2}^k|\langle f_i,g_j \rangle|^2\right)  = \sum_{i=1}^k\sum_{j=1}^k|\langle f_i,g_j \rangle|^2  = P_F(G)  = n.
\end{align*}
Since $P_F(H)=n$, we conclude that $H$ cannot be an alternate dual for $F$;  if it was, we'd have $P_F(H) >n$. This example illustrates how when the domain of $P_F(G)$ is extended to include non-duals, the cross potential value $n$ is no longer unique. From this example we can also see that, for any frame $F$, reversing the sign of any number of nonzero vectors of the canonical dual will never result in a dual frame for $F$. 
\end{example}

  \begin{definition}
Let $\mathbb{H}$ be a  Hilbert space. Two frames for $\mathbb{H}$, $F = \{f_j\}_{j \in J}$ and $F' = \{f'_j\}_{j \in J}$,   are \textit{unitarily equivalent} if there exists an orthogonal ($\mathbb{H}$ is a real space) or unitary ($\mathbb{H}$ is complex) operator $U$ on $\mathbb{H}$ such that $f_j = Uf'_j$ for all $j \in J$.
\end{definition}

\begin{proposition}
Let $F,G$ be frames for $\mathbb{F}^n$. Let  $U:\mathbb{F}^n \rightarrow \mathbb{F}^n$ be a unitary operator. 
Then $UF$ and $UG$ are also frames for $\mathbb{F}^n$, and $$P_F(G) = P_{UF}(UG).$$
\end{proposition}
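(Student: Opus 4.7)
The plan is to reduce everything to the single fact that a unitary (or orthogonal, in the real case) operator preserves the inner product: $\langle Ux, Uy\rangle = \langle x, y\rangle$ for all $x,y\in\mathbb{F}^n$. This is the engine for both claims in the proposition.

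First I would dispatch the easy part that $UF = \{Uf_i\}_{i=1}^k$ and $UG = \{Ug_i\}_{i=1}^k$ are frames. Since $U$ is unitary, $\|Uf\|=\|f\|$ for every $f$. Starting from the frame inequality for $F$ applied to the vector $U^*f$, using $\|U^*f\|=\|f\|$ and $\langle U^*f, f_i\rangle = \langle f, Uf_i\rangle$, one obtains $A\|f\|^2 \leq \sum_{i=1}^k |\langle f, Uf_i\rangle|^2 \leq B\|f\|^2$ for all $f\in\mathbb{F}^n$, so $UF$ is a frame with the same bounds $A,B$; the identical argument works for $UG$.

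The main claim $P_F(G) = P_{UF}(UG)$ then reduces to a single line of computation: for each pair of indices $i,j$,
\begin{equation*}
\langle Uf_i, Ug_j\rangle = \langle f_i, U^*U g_j\rangle = \langle f_i, g_j\rangle,
\end{equation*}
since $U^*U = I_n$. Taking the square of the modulus and summing over $i,j$ from $1$ to $k$ gives
\begin{equation*}
P_{UF}(UG) = \sum_{i=1}^k\sum_{j=1}^k |\langle Uf_i, Ug_j\rangle|^2 = \sum_{i=1}^k\sum_{j=1}^k |\langle f_i, g_j\rangle|^2 = P_F(G).
\end{equation*}

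There is no real obstacle here; the proposition is essentially a book-keeping exercise expressing the unitary-invariance of the cross-frame potential. The only point worth being careful about is using $U^*U = I_n$ (rather than $UU^*$) in the right place, and noting that the same proof works uniformly in the real (orthogonal $U$) and complex (unitary $U$) cases, since in both cases the defining identity $\langle Ux, Uy\rangle = \langle x, y\rangle$ holds.
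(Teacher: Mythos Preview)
Your proof is correct and follows essentially the same route as the paper: both arguments reduce to the identity $\langle Uf_i, Ug_j\rangle = \langle f_i, U^*Ug_j\rangle = \langle f_i, g_j\rangle$ and then sum. You give a bit more detail on why $UF$ and $UG$ are frames, whereas the paper simply asserts it, but the core reasoning is identical.
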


\begin{proof}
Let $F = \{f_i\}_{i=1}^k$ and $G = \{g_i\}_{i=1}^k$ be frames for $\mathbb{F}^n$. If $U$ is an unitary operator, then $UF$ and $UG$ are frames for $\mathbb{F}^n$ as well. %
For all $i,j \in \{1,...,k\}$, we have
$$\langle Uf_i,Ug_j \rangle = (Uf_i)^*Ug_j = f_i^*U^*Ug_j = f_i^*(U^*Ug_j) = \langle f_i,U^*Ug_j \rangle = \langle f_i,g_j \rangle.$$ These are exactly the inner products in the sum for $P_{UF}(UG)$, so the result follows. 
\end{proof}

The quality of the diagonal of the cross-Gramian preserves the analogous  result    from  Proposition~\ref{BHdiag} on  Gramians: 
\color{black}
\begin{proposition}\label{prop29}
Let $F = \{f_i\}_{i=1}^k$ be a frame for $\mathbb{F}^n$, and let $G = \{g_i\}_{i=1}^k$ be a dual frame for $F$. Then 
$$\sum_{i=1}^k | \langle f_i,g_i \rangle |^2 \geq \frac{n^2}{k},$$
with equality if and only if $\langle f_i,g_i \rangle = \frac{n}{k}$ for every $1 \leq i \leq k$.
\end{proposition}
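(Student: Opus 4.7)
The plan is to reduce the bound to a one-line Cauchy--Schwarz inequality, using the fact that the diagonal of the cross-Gramian $Gr(F,G)$ has a fixed sum whenever $G$ is dual to $F$. The key identity is
\[
\sum_{i=1}^k \langle f_i, g_i\rangle \;=\; \mathrm{tr}\bigl(Gr(F,G)\bigr) \;=\; n,
\]
which I would derive exactly as in the proof of Proposition~\ref{generalresult}: since $G$ is a dual frame for $F$, the cross-Gramian $Gr(F,G) = \theta_F \theta_G^*$ is idempotent (as recorded in the Background section), so its trace equals its rank. Because $\theta_F$ and $\theta_G^*$ are both of rank $n$ (their rows/columns span $\mathbb{F}^n$), the product has rank $n$, hence $\mathrm{tr}(Gr(F,G)) = n$. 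Note that this argument does \emph{not} require $G$ to be the canonical dual; idempotence and the rank computation only use the dual relation $\theta_G^* \theta_F = I_n$.

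Once this diagonal-sum identity is in hand, the inequality is immediate from Cauchy--Schwarz applied to the sequence $(\langle f_i,g_i\rangle)_{i=1}^k$ paired against the all-ones vector:
\[
n^2 \;=\; \Bigl|\sum_{i=1}^k \langle f_i, g_i\rangle\Bigr|^2 \;\leq\; k \sum_{i=1}^k |\langle f_i, g_i\rangle|^2,
\]
which rearranges to the desired lower bound $\sum_{i=1}^k |\langle f_i,g_i\rangle|^2 \geq n^2/k$.

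For the equality characterization, I would invoke the standard equality condition in Cauchy--Schwarz: equality forces $(\langle f_i, g_i\rangle)_i$ to be a constant sequence, say $\langle f_i, g_i\rangle = c$ for all $i$. Combining with the trace identity $\sum_i \langle f_i, g_i\rangle = n$ yields $kc = n$, so $c = n/k$ (automatically real). Conversely, if every diagonal entry equals $n/k$, direct summation gives the value $k\cdot (n/k)^2 = n^2/k$, matching the bound.

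There is no real obstacle here; the only conceptual point worth highlighting is that the argument works for \emph{any} dual $G$, not just the canonical one, because idempotence (and hence the fact that the trace equals $n$) is a property of all dual pairs. This mirrors Proposition~\ref{BHdiag} of Bodmann--Haas, now transferred from the Gramian of a Parseval frame to the cross-Gramian of an arbitrary dual pair.
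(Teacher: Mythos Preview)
Your proof is correct and follows essentially the same route as the paper's: both use the idempotence of the cross-Gramian to conclude that $\sum_i \langle f_i,g_i\rangle = \mathrm{tr}(Gr(F,G)) = n$, then apply the elementary inequality $\left|\sum a_i\right|^2 \le k\sum |a_i|^2$ (which the paper phrases as Jensen's inequality and you phrase as Cauchy--Schwarz against the all-ones vector---these are the same thing here). The equality analysis is also identical.
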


\begin{proof}
Applying Jensen's inequality, we have
$$\sum_{i=1}^k|\langle f_i,g_i \rangle |^2 \geq \frac{(\sum_{i=1}^k\langle f_i,g_i \rangle)^2}{k},$$
with equality if and only if there is some constant $C > 0$ such that $\langle f_i,g_i \rangle = C$ for every $1 \leq i \leq k$. 
As stated in the proof of Proposition \ref{generalresult}, when $G$ is dual to $F$, $(\theta_F\theta_G^*)^2 = \theta_F\theta_G^*$, and so the trace of $\theta_F\theta_G^*$ is equal to its rank, which is $n$. Therefore we have
$$\sum_{i=1}^k|\langle f_i,g_i \rangle |^2 \geq \frac{n^2}{k},$$
with equality if and only if $\langle f_i,g_i \rangle = \frac{n}{k}$ for every $1 \leq i \leq k$. 
\end{proof}

A frame pair $(F, \tilde{F}) $ which satisfies the equality in Proposition~\ref{prop29} is studied later in Example~\ref{GrassmannEx}.

\subsection{Cross $p$-th frame potential}\label{crossPpotential}

\begin{definition}
The \textit{cross $p$-th frame potential of a frame $F$ with respect to a dual frame $G$} for $\mathbb{F}^n$ is given by 
$$\phi _p (F,G) = \sum_{i,j=1}^k | \langle f_i,g_j \rangle |^{2p}.$$
\end{definition}

\begin{theorem}\label{whatsL}
Let $F = \{f_i\}_{i=1}^k$ be a frame for $\mathbb{F}^n$, and let $G = \{g_i\}_{i=1}^k$ be a dual frame for $F$     
such that $\langle f_i,g_i \rangle = C_1$ for all $1 \leq i \leq k$, where $C_1 > 0$ is a constant, and let $p \geq 1$. Then
\begin{equation}\label{equalThm39}
\phi _p (F,G) \geq \frac{(nk-n^2)^p + n^{2p}(k-1)^{p-1}}{k^{2p-1} (k-1)^{p-1}}, \end{equation}
with equality in \eqref{equalThm39} if and only if 
\begin{itemize} 
\item[i.] $G$ is the canonical dual frame of $F$,  
and  

\item[ii.] there exists some $C_2 > 0$ such that $| \langle f_i,g_j \rangle |^2 = C_2$ for all $i,j \in \{1,...k\}$ with $i \neq j$.
\end{itemize}
 
\end{theorem}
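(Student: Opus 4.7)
The plan is to split the double sum into its diagonal and off-diagonal parts and bound them separately. First, I would use the hypothesis $\langle f_i,g_i\rangle=C_1$ together with the duality of $F$ and $G$: as observed in the proof of Proposition~\ref{generalresult}, the cross-Gramian $Gr(F,G)$ is an idempotent of rank $n$, so
\[
\sum_{i=1}^k \langle f_i,g_i\rangle \;=\; \mathrm{Tr}\,Gr(F,G) \;=\; n.
\]
Combined with the constancy assumption this forces $C_1=n/k$, giving an exact diagonal contribution
\[
\sum_{i=1}^k |\langle f_i,g_i\rangle|^{2p} \;=\; k\Bigl(\tfrac{n}{k}\Bigr)^{2p} \;=\; \frac{n^{2p}}{k^{2p-1}}.
\]

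Next I would attack the off-diagonal terms using convexity of $t\mapsto t^p$ on $[0,\infty)$ for $p\ge 1$. Jensen's (power-mean) inequality applied to the $k(k-1)$ values $|\langle f_i,g_j\rangle|^2$ with $i\ne j$ gives
\[
\sum_{i\ne j} |\langle f_i,g_j\rangle|^{2p} \;\ge\; \frac{\bigl(\sum_{i\ne j}|\langle f_i,g_j\rangle|^2\bigr)^{p}}{(k(k-1))^{p-1}}.
\]
Now I would invoke Theorem~\ref{mainresult} to control the inner sum from below: since $G$ is a dual of $F$, $P_F(G)=\sum_{i,j}|\langle f_i,g_j\rangle|^2\ge n$, and subtracting the already-known diagonal $n^2/k$ yields
\[
\sum_{i\ne j}|\langle f_i,g_j\rangle|^2 \;\ge\; n-\tfrac{n^2}{k} \;=\; \tfrac{nk-n^2}{k}.
\]
Substituting this back and simplifying gives the off-diagonal contribution $\ge (nk-n^2)^p / \bigl(k^{2p-1}(k-1)^{p-1}\bigr)$. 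Adding the diagonal term and putting everything over the common denominator $k^{2p-1}(k-1)^{p-1}$ produces exactly the claimed bound \eqref{equalThm39}.

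For the equality characterization I would trace back both inequalities. Equality in the application of Theorem~\ref{mainresult} forces $G$ to be the canonical dual of $F$, which is condition (i). Equality in Jensen's inequality (strict convexity of $t\mapsto t^p$ for $p>1$) forces all off-diagonal values $|\langle f_i,g_j\rangle|^2$ to coincide, i.e.\ condition (ii), with the common value $C_2$ pinned down by $k(k-1)C_2=(nk-n^2)/k$. Conversely, if both conditions hold, the computations above are equalities throughout and the claimed lower bound is attained.

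I do not anticipate any essential obstacle: the only subtlety is keeping the diagonal-versus-off-diagonal bookkeeping straight and correctly invoking the trace/rank identity to pin down $C_1=n/k$ rather than leaving it as an undetermined constant. The argument has the same flavour as the proof of Theorem~\ref{HaasPotent} from \cite{BodmannHaas}, adapted here to the cross-frame setting via the self-adjoint idempotent structure of $Gr(F,\tilde{F})$.
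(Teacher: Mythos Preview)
Your proposal is correct and follows essentially the same route as the paper's proof: split $\phi_p(F,G)$ into diagonal and off-diagonal parts, use idempotence of $Gr(F,G)$ to force $C_1=n/k$, apply Jensen's inequality to the off-diagonal terms, and then invoke Theorem~\ref{mainresult} to bound $\phi_1(F,G)-n^2/k$ from below; the equality analysis is also the same. Your write-up is in fact a bit more explicit about the algebra and about the strict-convexity step behind condition~(ii), but there is no substantive difference in strategy.
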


Note that in  Theorem~\ref{HaasPotent}, the $p-$th frame potential of a frame $F$ is minimized when $F$ is an equiangular Parceval frame. In Theorem~\ref{whatsL} we cannot tie the minimization to the angles between the frame vectors because, even if   the \color{black} frame elements of $F$ have equal norm, the dual frame elements are not necessarily of equal norm.

\color{black}

\color{black}
\begin{proof}

By \eqref{projectionS}, $Tr(\theta_F\theta_G^*)$ is equal to its rank, which is $n$, as explained in the proof of Proposition \ref{generalresult}. 
Then since $\sum_{i=1}^k \langle f_i,g_i \rangle = Tr(\theta_F\theta_G^*)$, the only way to have $\langle f_i,g_i \rangle = C_1$ for all $1 \leq i \leq k$ is for $C_1$ to be $\frac{n}{k}$. 

Now consider $\sum_{i \neq j} |\langle f_i,g_j \rangle |^{2p}$.
Applying  Jensen's Inequality, we have 
$$\sum_{i \neq j} |\langle f_i,g_j \rangle |^{2p} \geq \frac{(\sum_{i \neq j} |\langle f_i,g_j \rangle |^2)^p}{k^{p-1} (k-1)^{p-1}} = \frac{1}{k^{p-1} (k-1)^{p-1}}\left(\phi _1 (F,G) - \frac{n^2}{k}\right)^p,$$
with equality if and only if there exists $C_2 > 0$ such that $|\langle f_i,g_j \rangle |^2 = C_2$ for all $i,j \in \{1,...,k\}$ with $i \neq j$. Adding this to $\displaystyle \sum_{i=1 }^k |\langle f_i,g_i \rangle |^{2p}$ and applying Theorem \ref{mainresult}, the result follows.

%
\end{proof}

As stated in the proof of Theorem~\ref{whatsL}, the only way for two duals $F,G$ to satisfy the condition  $\langle f_i,g_i \rangle = C_1$ for all $i \in \{1,...k\}$ is when  $C_1 =\frac{n}{k}$. Observe  that in the case of equality in \eqref{equalThm39}, we must have 
\begin{equation} \label{C2value}
C_2=|\langle f_i,g_j \rangle |^2 = \frac{nk-n^2}{k^2(k-1)} \; \text{ for all}  \; 1 \leq i \neq j \leq k.\end{equation}

\begin{example}\label{MerzedesISnice}
The { Mercedes} \color{black} frame $F$  (Example~\ref{tightFrameExample}) and its canonical dual $\frac{2}{3}F$ satisfy  items $i.$ and $ii.$ in  Theorem~\ref{whatsL}  as $\langle f_i,g_i \rangle =  2/3$,   $\langle f_i,g_j \rangle =\pm 1/3$ for $i \neq j$. This was fairly expected, since $F$ is an equiangular tight frame (as required in Theorem~\ref{HaasPotent}).  
\end{example}

\begin{example}\label{niceEx}

It shows that any frame $F=\{f_i\}_{i=1}^3$ of  $\mathbb{R}^2$ and its dual frame $G=\{g_i\}_{i=1}^3$ would satisfy items $i.$ and $ii.$ in  Theorem~\ref{whatsL} 
 if $\langle f_i,g_i \rangle =  2/3$   and   $\langle f_i,g_j \rangle =\pm 1/3$ for $i \neq j$.
  For instance, given a frame  $F$:

$$ \; F= \begin{bmatrix}
 1 & 0 & 1\\
 0 & 1 & 1
\end{bmatrix}, \; \text{its canonical dual is } \; \tilde{F}=\frac{1}{3}\begin{bmatrix}
2 & -1 & 1\\
-1 & 2 & 1
\end{bmatrix},$$
and items $i.$ and $ii.$ are satisfied even though $F$ is not an equiangular nor an equal-norm frame. 
Note that if  we work with a  unit-norm \color{black} frame $F_0$ here (normalized $F$ from above), the canonical dual $G_0$ is not an equal norm frame: 
 \begin{equation*}
     F_0=\begin{bmatrix}
     1 & 0 & 2^{-1/2}\\
     0 & 1 & 2^{-1/2}
     \end{bmatrix},  \, G_0=\begin{bmatrix}
     3 & -1 & 2^{1/2}\\
     -1 & 3 & 2^{1/2}
     \end{bmatrix}, 
 \end{equation*}
 and their cross-Gramian does not satisfy conditions $i.$ and $ii.$  
\end{example} 
\begin{remark}

   The frame $F$ in Example~\ref{niceEx} can be interpreted  as a weighted frame generated from $F_0$ (with weights $1, 1, 2^{-1/2}$), so it is worth exploring the answer to the following question: 
   Which types of frames (after weighing) will satisfy items $i.$ and $ii.$ of Theorem~\ref{whatsL}?

   
   \end{remark} 
   
\begin{remark} 
Any equi-norm tight frame and its canonical dual will satisfy the condition $\langle f_i,g_i \rangle = \frac{n}{k}$ for all $i$; also,  any equiangular tight frame and its canonical dual will satisfy the condition $|\langle f_i,g_j \rangle|^2 = C_2$ for all $i \neq j$. If $F = \{f_i\}_{i=1}^k$ is an equi-norm tight frame with frame bound $A$, then its canonical dual will be $G = \{\frac{1}{A}f_i\}_{i=1}^k$, and so for all $i$, we have 
$$\langle f_i,g_i \rangle = \langle f_i,\frac{1}{A}f_i \rangle = \frac{1}{A}\langle f_i,f_i \rangle,$$
and since $F$ is equi-norm, this value will be the same for all $i$.

Similarly, if $F$ is an equiangular tight frame, then for all $i \neq j$, we have
$$|\langle f_i,g_j \rangle| = |\langle f_i,\frac{1}{A}f_j\rangle| = \frac{1}{A}|\langle f_i,f_j \rangle |,$$ and this value is the same for all $i \neq j$ since $F$ is equiangular. 
\end{remark}

 \section{ Off-Diagonal Magnitude}\label{exponPot}
 
Let $F=\{f_1,\hdots, f_k\}$ be a frame for $\mathbb{F}^n$, and let $H=\{h_1,\hdots, h_k\}$ be a dual frame for $F$. We denote the cross-Gramian of $F$ and $H$ by $Gr(F,H)$ and we denote the {\it maximal off-diagonal magnitude } of $Gr(F,H)$ as 
\[\mu(Gr(F,H)):=\max_{i\neq j}\vert\langle f_i,h_j\rangle \vert.\]

\color{black}

\begin{example}\label{exNonSymmetricGram}
 
The cross Gramians of the frame $F$  from Example~\ref{frameExample1} with its canonical dual $G$ and the other dual frame $H$ as listed in Example~\ref{example2revisit} are
$$ Gr(F,G)=\frac{1}{6}\begin{bmatrix}
2 & 2 & 2\\
2 & 5 & -1\\
2 & -1 & 5
\end{bmatrix}, \;\;\; Gr(F, H) = \frac{1}{2}\begin{bmatrix}
2 & 0 & 0\\
2& 1 & -1\\
2 & -1 & 1
\end{bmatrix},$$  with 
$\mu(Gr(F,G))=\frac{1}{3}$, 
while  $\mu(Gr(F,H))=1$.  In fact, any dual frame $D$ of $F$ has frame elements: 
$$ \begin{bmatrix}
x\\y
\end{bmatrix}, \; \frac{1}{2}\begin{bmatrix}
 {1-x} \\  {1-y} 
\end{bmatrix}, \frac{1}{2}\begin{bmatrix}
 {-1-x} \\   {1-y} 
\end{bmatrix}, \; \text{for some real $x$, $y$};$$
%
the cross Gramian of $F$ and any dual frame $D$ is  
$$ Gr(F,D)=\frac{1}{2} \begin{bmatrix}
2y & 1-y & 1-y \\
2(x+y) & 2- ( x+y) & \ -x-y  \\
2(y-x) & x-y & x-y+2
\end{bmatrix}.$$
If for some $x$, $y$ we have $\mu(Gr(F,D)) < \frac{1}{3}$,   we'd require 
\begin{equation}\label{maxvalD}
 \max \{  \vert x+y \vert, \vert y-x\vert,  \frac{\vert 1-y\vert }{2}  \} <\frac{1}{3}.  
\end{equation}
If both $-\frac{1}{3}< x+y<\frac{1}{3}$ and $-\frac{1}{3}<y-x<\frac{1}{3}$ then $-\frac{2}{3}<2y<\frac{2}{3}$, that is $\vert y\vert <\frac{1}{3}$.  But also, $-\frac{1}{3}<\frac{1-y}{2}<\frac{1}{3}$, which implies $-\frac{2}{3}<1-y<\frac{2}{3}$, that is $\frac{5}{3}>y>\frac{1}{3}$, which is impossible. Thus  
$$\min \{\mu(Gr(F,D)) \, | \, \text{ $D$ is a dual frame for $F$} \}= \frac{1}{3},$$ which we see happens with the canonical dual $D=G$. However, this minimization is not  exclusive; the columns of  
\begin{equation}
    \theta_d^* =\begin{bmatrix}
    0 & \frac{1}{2} & -\frac{1}{2}\\
    \frac{1}{3} & \frac{1}{3} & \frac{1}{3}  \end{bmatrix}
\end{equation}
form a dual frame  $F_d$ for $F$ with the same property: $\mu(Gr(F, F_d))=\frac{1}{3}$. 

\end{example}


\vspace{2.3mm}

The canonical dual $\tilde{F}$ of a given   frame $F$ 
is the natural candidate that would minimize 
$\mu(Gr(F, H))$ with $H=\tilde{F}$, 
but this minimization is not necessarily unique (Example~\ref{exNonSymmetricGram}). 
This uniqueness of minimization is  true for Grassmannian equal norm Parseval frames \cite{BodmannHaas},   however there are non-Parseval frames which  satisfy the same property:

\vspace{2.3mm}




\color{black}

\begin{example}\label{GrassmannEx} The frame $F$ in Example~\ref{niceEx} is a frame for which $\mu(Gr(F, F_d))$ is minimised exclusively at its canonical dual. 
Any dual frame $F_d$ of $F$  is of type 
\begin{equation}
    F_d= \begin{bmatrix}
    x & x-1 & 1-x\\
    y-1 & y & 1-y
    \end{bmatrix}, \,   \text{while} \, \,  \tilde{F}=\frac{1}{3} \begin{bmatrix}
    2 & -1 & 1\\
    -1 & 2 & 1
    \end{bmatrix}
\end{equation}
is its canonical dual ($x=y=2/3$ in $F_d$). The cross Gramians are 
\begin{equation}
    Gr(F, F_d) = \begin{bmatrix}
    x & y-1 & x+y -1 \\
    x-1 & y & x+y -1 \\
    1-x & 1-y & 2-x-y
    \end{bmatrix}, \, \, Gr(F, \tilde{F})= \frac{1}{3} \begin{bmatrix}
    2 & -1 & 1 \\
    -1 & 2 & 1 \\
    1 & 1 & 2
    \end{bmatrix},
\end{equation} so $\mu(Gr(F, \tilde{F})) = \frac{1}{3}$. If  for some values of $x, y$ we have $\mu(Gr(F,  {F}_d)) \leq \frac{1}{3}$, then 
\begin{center} $\vert 1-x \vert \leq \frac{1}{3}$, $\vert 1-y\vert  \leq \frac{1}{3}$ and $\vert x+y -1 \vert \leq \frac{1}{3}$;\end{center}  the only values for $x$ and $y$ which satisfy all three inequalities are $x=y=\frac{2}{3}$, which determine $\tilde{F}$. 
\end{example}
 
  Examples~\ref{exNonSymmetricGram} and \ref{GrassmannEx} indicate  a   lower bound for $\mu$, which would be a generalization of the Welch bound \cite{Welch}; as seen in \cite{ChrisDatta}:
 \begin{lemma}\label{sth}
 Let $F$ be a frame for $\mathbb{F}^n$, and let $G$ be one of its dual frames such that $\langle f_i, g_i \rangle =  \frac{n}{k}$. Then 
\begin{equation}\label{mulowerboundspec} \mu(Gr(F, G)) \geq \sqrt{\frac{nk-n^2}{k^2(k-1)}}, \end{equation} with equality whenever $G$ is the canonical dual of $F$.
 \end{lemma}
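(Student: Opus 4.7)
The plan is to bootstrap the lower bound from the cross-frame potential estimate in Theorem \ref{mainresult} via the trivial inequality between a maximum and an average. First, since $\mu(Gr(F,G))$ is the maximum of $|\langle f_i,g_j\rangle|$ over indices $i\neq j$, one has
\[
\sum_{i\neq j}|\langle f_i,g_j\rangle|^2 \;\leq\; k(k-1)\,\mu(Gr(F,G))^2,
\]
so it suffices to bound the off-diagonal sum of squares from below. For that I would split $P_F(G)$ into its diagonal and off-diagonal contributions and use the hypothesis to freeze the diagonal part.

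Since $\langle f_i,g_i\rangle = n/k$ for every $i$, the diagonal contribution is exactly $k\cdot(n/k)^2 = n^2/k$. Theorem \ref{mainresult} gives $P_F(G)\geq n$ for every dual frame $G$ of $F$, hence
\[
\sum_{i\neq j}|\langle f_i,g_j\rangle|^2 \;=\; P_F(G)-\frac{n^2}{k} \;\geq\; n-\frac{n^2}{k} \;=\; \frac{nk-n^2}{k}.
\]
Chaining with the previous display, dividing by $k(k-1)$, and taking a square root produces \eqref{mulowerboundspec}. For the equality claim, I would invoke Proposition \ref{generalresult}: when $G=\tilde{F}$ is the canonical dual, $P_F(G)=n$ exactly, so the lower bound on the off-diagonal sum of squares tightens to an equality. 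The max-versus-average step is itself tight precisely when every off-diagonal entry has magnitude equal to the common value $C_2$ of \eqref{C2value}, which is the situation realized in Example \ref{MerzedesISnice} and Example \ref{niceEx}.

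I do not anticipate any real obstacle: the whole argument is essentially a one-line Cauchy/averaging reduction to results already in the paper. The only point requiring care is the wording of the equality case, since $G=\tilde{F}$ together with $\langle f_i,g_i\rangle = n/k$ guarantees equality in the potential step but not automatically in the max-vs-average step; the latter needs the off-diagonals to share a common magnitude, matching condition \emph{ii.} of Theorem \ref{whatsL}.
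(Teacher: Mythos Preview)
Your argument is correct and is essentially the paper's own proof: the paper simply says the lemma ``follows directly from Proposition~\ref{mainresult} and Proposition~\ref{prop29},'' which unpacks to exactly your diagonal/off-diagonal split combined with the max-versus-average bound. Your closing remark on the equality case is in fact more careful than the paper's one-line justification, correctly flagging that $G=\tilde{F}$ alone forces equality in the potential step but that the averaging step additionally requires constant off-diagonal magnitude (condition~\emph{ii.} of Theorem~\ref{whatsL}).
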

  Lemma~\ref{sth} follows directly from  Proposition~\ref{mainresult} and  Proposition~\ref{prop29}.  We  further focus our attention on a special type of a frame pair:
\color{black}
\begin{definition}
 
  A frame $F$  for $\mathbb{F}^n$ forms a  {\it Grassmannian pair} with its dual frame  $\tilde{F}$ if 
\begin{equation}\label{minmu}
    \mu(Gr(F,\tilde{F}))=\min \{\mu(Gr(F,H)) \, | \, H \, \text{ is a dual frame of } \, F\}.
\end{equation}

\end{definition}
In other words, if   $F$ forms a Grassmannian pair with its dual frame $\tilde{F}$, 
then for any dual frame $H$ of $F$ such that $(F, H)$ is not a Grassmannian pair, there exists some $\epsilon > 0$ %
such that 
\begin{equation}\label{tinyEQ}
   \mu(G(F,H))^2 =   \mu(G(F,\tilde{F}))^2 + \epsilon.
\end{equation}
Some frames form an   {\it exclusive} Grassmannian pair with their canonical dual (Example~\ref{GrassmannEx}),  while other frames (Example~\ref{exNonSymmetricGram})  have more than one  dual frame which satisfy \eqref{minmu}.

\vspace{2.3mm}

\subsection{Exponential Potential}

The {\it exponential potential } of $G=Gr(F, H)$ is mapping each pairing $(f_i, h_j)$ into 
\[ E^{\eta}_{i,j} (G) := e^{\eta \vert \langle f_i, h_j \rangle \vert ^2}   \; \text{for any } \; \eta>0.\color{black}\] 
 In particular, the {\it off-diagonal sum potential} of $G$ is
\[\Phi^{\eta}_{od} (G) := \sum_{i=1}^k \sum_{j=1}^k (1 - \delta_{i,j}) E^{\eta}_{i,j} (G),\]   which will  identify a Grassmanian pair as its minimizer (Corollary \ref{grasmannianC}).\color{black}

\begin{proposition}\label{limitprop} Given a frame $F$ for $\mathbb{F}^n$ and a dual frame $H$ of $F$,   it holds
\begin{equation}\label{wantedEq} 
\mu(G)^2 = \lim_{\eta \rightarrow \infty }\frac{1}{\eta} \ln \Phi^{\eta}_{od} (G).\end{equation}
\end{proposition}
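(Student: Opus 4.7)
The statement is a log-sum-exp style limit: we are recovering the maximum absolute value on the off-diagonal of $G = Gr(F,H)$ as the limit of rescaled log partition functions. The plan is to let $M := \mu(G)^2 = \max_{i \neq j} |\langle f_i, h_j\rangle|^2$ and prove a matching upper and lower bound on $\frac{1}{\eta}\ln \Phi^{\eta}_{od}(G)$.

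For the \emph{upper bound}, I would use the crude estimate $|\langle f_i, h_j\rangle|^2 \leq M$ for every off-diagonal pair $(i,j)$, which gives $E^{\eta}_{i,j}(G) \leq e^{\eta M}$. Summing over the at most $k(k-1)$ off-diagonal pairs yields $\Phi^{\eta}_{od}(G) \leq k(k-1)\, e^{\eta M}$, hence
\[
\frac{1}{\eta}\ln \Phi^{\eta}_{od}(G) \;\leq\; M + \frac{\ln(k(k-1))}{\eta},
\]
and the second term vanishes as $\eta\to\infty$.

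For the \emph{lower bound}, since the maximum is attained at some off-diagonal index pair $(i_0, j_0)$ with $|\langle f_{i_0}, h_{j_0}\rangle|^2 = M$, and every remaining term of $\Phi^{\eta}_{od}(G)$ is positive, we obtain $\Phi^{\eta}_{od}(G) \geq e^{\eta M}$, so
\[
\frac{1}{\eta}\ln \Phi^{\eta}_{od}(G) \;\geq\; M.
\]
Combining the two bounds, $\lim_{\eta\to\infty}\frac{1}{\eta}\ln \Phi^{\eta}_{od}(G) = M = \mu(G)^2$, which is \eqref{wantedEq}.

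There is no real obstacle here; the only point worth flagging is the degenerate case $k=1$ (no off-diagonal entries, so $\Phi^{\eta}_{od}(G)=0$ and the formula is vacuous), but since we are working with frames in a setting where $k\geq n$ and the off-diagonal set is implicitly assumed nonempty, this case can be safely excluded. The argument is essentially the standard $L^p \to L^\infty$ limit applied to a discrete measure on the off-diagonal indices, with the weight assigned to index $(i,j)$ being $|\langle f_i, h_j\rangle|^2$.
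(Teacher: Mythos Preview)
Your proof is correct and follows essentially the same route as the paper: both arguments sandwich $\Phi^{\eta}_{od}(G)$ between $e^{\eta \mu(G)^2}$ and $k(k-1)\,e^{\eta \mu(G)^2}$, take logarithms, divide by $\eta$, and let $\eta\to\infty$. Your added remark on the degenerate case $k=1$ is a reasonable caveat not present in the paper.
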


\begin{proof}
Let $F$ be a frame for $\mathbb{F}^n$ and let $H$ be a dual frame of $F$. Observe that for the cross Gramian $G=Gr(F,H)$ we have 
 

\begin{align}\label{ineqMuG}
 e^{\eta \mu(G)^2} &= e^{\eta \max_{i\neq j} \vert\langle f_i,h_j\rangle \vert^2} \leq \sum_{i=1}^k \sum_{j=1}^k (1 - \delta_{i,j}) e^{\eta \vert \langle f_i, h_j \rangle \vert^2} \nonumber \\
&=\Phi^{\eta}_{od} (G)  
\leq \sum_{i=1}^k \sum_{j=1}^k (1 - \delta_{i,j}) e^{\eta \mu(G)^2} = k(k-1) e^{\eta \mu(G)^2}.\end{align}

By \eqref{ineqMuG} we have
\[ \ln \left(e^{\eta \mu(G)^2} \right) \leq \ln \left( \Phi^{\eta}_{od} (G)\right) \leq \ln \left(k(k-1)e^{\eta \mu(G)^2} \right),\]
that is 
\[ \eta \mu(G)^2 \leq \ln \left( \Phi^{\eta}_{od} (G)\right) \leq \ln \left(k(k-1) \right) +\ln \left(e^{\eta \mu(G)^2}  \right),\]
so 
\begin{equation}\label{usefulIneq} 
\mu(G)^2 \leq \frac{1}{\eta}\ln \left( \Phi^{\eta}_{od} (G)\right) 
\leq \frac{\ln \left(k(k-1) \right)}{\eta} + \mu(G)^2.\end{equation} 
If we allow $\eta \rightarrow \infty$ in \eqref{usefulIneq}, we get \eqref{wantedEq}. 
\end{proof}
Now, let $F$ form a Grassmannian pair with a dual frame $\tilde{F}$, and let $H$ be any other dual frame of $F$ such that $(F,H)$ is not a Grassmannian pair. We use the notations $G=G(F,\tilde{F})$ and $G'=G(F,H)$. By combining \eqref{tinyEQ} and \eqref{usefulIneq} with the assumption that $\eta>\ln(k(k-1))/\epsilon$,  
we obtain
\begin{equation}\label{usedIneq}
     \mu(G)^2 \leq \frac{1}{\eta}\ln \left( \Phi^{\eta}_{od} (G)\right) <   \mu(G)^2 +\epsilon = \mu(G')^2.\end{equation}
By \eqref{usedIneq} and \eqref{ineqMuG} we see
   $e^{\eta \mu(G)^2} \leq \Phi^{\eta}_{od} (G) < e^{\eta\mu(G')^2} \leq  \Phi^{\eta}_{od} (G')$: \color{black}
 
\begin{corollary}\label{grasmannianC}
If $F$ is a frame for $\mathbb{F}^n$ which  forms a Grassmannian pair with a dual frame $\tilde{F}$, then for $\eta >0$ large enough,
$$\Phi^{\eta}_{od} (G) < \Phi^{\eta}_{od} (G'),$$
where $G=Gr(F,\tilde{F})$ is the cross Gramian with the Grassmannian dual $\tilde{F}$ of $F$, and $G'$ is a cross Gramian with any  dual frame $H$ of $F$ such that $(F,H)$ is not a Grassmannian pair.
\end{corollary}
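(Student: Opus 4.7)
The plan is to mirror the chain of inequalities the author has essentially assembled in the two paragraphs preceding the corollary, and to turn it into a clean strict inequality once $\eta$ is chosen large enough. The guiding idea is that Proposition~\ref{limitprop} tells us $\frac{1}{\eta}\ln\Phi^{\eta}_{od}(G)$ converges to $\mu(G)^2$, so once the off-diagonal magnitudes $\mu(G)$ and $\mu(G')$ are strictly separated (which is exactly what the Grassmannian pair hypothesis provides), the two exponential sums must also separate for sufficiently large $\eta$.

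First I would unpack the definition of a Grassmannian pair: since $(F,H)$ is not Grassmannian while $(F,\tilde{F})$ is, there exists some $\epsilon>0$ with $\mu(G')^2=\mu(G)^2+\epsilon$, as recorded in \eqref{tinyEQ}. Next I would recall from \eqref{ineqMuG} the uniform sandwich
\[ e^{\eta\mu(G)^2}\leq \Phi^{\eta}_{od}(G)\leq k(k-1)e^{\eta\mu(G)^2},\]
which, after taking logarithms and dividing by $\eta$, is precisely \eqref{usefulIneq}. The crucial choice is then $\eta>\ln(k(k-1))/\epsilon$, so that the additive error term $\ln(k(k-1))/\eta$ in the upper bound of \eqref{usefulIneq} is strictly smaller than $\epsilon$.

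With this choice, I would chain the inequalities: by the upper half of \eqref{usefulIneq} applied to $G$,
\[ \frac{1}{\eta}\ln\Phi^{\eta}_{od}(G)\leq \mu(G)^2+\frac{\ln(k(k-1))}{\eta}<\mu(G)^2+\epsilon=\mu(G')^2,\]
i.e.\ $\Phi^{\eta}_{od}(G)<e^{\eta\mu(G')^2}$. On the other hand, the lower half of \eqref{ineqMuG} applied to $G'$ gives $e^{\eta\mu(G')^2}\leq \Phi^{\eta}_{od}(G')$. Concatenating these two estimates yields $\Phi^{\eta}_{od}(G)<\Phi^{\eta}_{od}(G')$, which is the conclusion.

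The only thing that needs any care is that the separation $\epsilon$ between $\mu(G)^2$ and $\mu(G')^2$ is strictly positive; this is not really an obstacle but it is where the definition of a Grassmannian pair is doing all of the work. Everything else is a one-line consequence of monotonicity of $\exp$ and the two-sided bound already recorded in the proof of Proposition~\ref{limitprop}, so no new inequality or inner-product identity needs to be established.
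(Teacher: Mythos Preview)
Your proposal is correct and follows exactly the argument the paper gives in the paragraph preceding the corollary: use \eqref{tinyEQ} to get the gap $\epsilon$, pick $\eta>\ln(k(k-1))/\epsilon$, combine with the upper bound in \eqref{usefulIneq} for $G$ to get $\Phi^{\eta}_{od}(G)<e^{\eta\mu(G')^2}$, and finish with the lower bound in \eqref{ineqMuG} for $G'$. There is no substantive difference between your write-up and the paper's.
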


\vspace{2.3mm}

\begin{definition} Let $F$ be a frame for $\mathbb{F}^n$ with $|F|=k \geq n$ and let $H$ be a dual frame for $F$. Then we define the \textit{sum potential} of $G = Gr(F,H)$ as  $$\Phi^{\eta}_{sum}(G) = \Phi^{\eta}_{od}(G) + \sum_{i=1}^k e^{-\eta (n^2/k^2-C^2_{k,n})}E_{i,i}^\eta (G),$$ where $C_{k,n} = \sqrt{\frac{nk-n^2}{k^2(k-1)}}.$ \end{definition}

Note that $C_{k,n}$ is the magnitude of each off-diagonal entry of $Gr(F,H)$ when equality holds in Theorem \ref{whatsL}.

\begin{proposition} Let $F$ be a frame for $\mathbb{F}^n$, $\vert F \vert = k \geq n$, let $H$ be a dual frame for $F$, and let $G = Gr(F,H)$ be the cross-Grammian of $F$ with $H$. Then 
\begin{equation}\label{expIneq} \Phi^{\eta}_{sum}(G) \geq k^2e^{\eta (\frac{n}{k^2}-\frac{n^2}{k^3}+\frac{n(k-n)}{k^3(k-1)})},
\end{equation} with equality if and only if all of the following hold: \begin{itemize} 
\item[i.] $H$ is the canonical dual for $F$, 

\item[ii.] every off-diagonal entry of $G$ has the same magnitude, and 

\item[iii.] every diagonal entry of $G$ has the same value.

\end{itemize}\end{proposition}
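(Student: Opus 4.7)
The plan is to apply the arithmetic--geometric mean inequality to the $k^2$ exponential summands of $\Phi^\eta_{sum}(G)$, and then use the inequality $P_F(H)\ge n$ from Theorem~\ref{mainresult}. First I would write the sum explicitly as
\[\Phi^\eta_{sum}(G)=\sum_{i\neq j}e^{\eta|\langle f_i,h_j\rangle|^2}+\sum_{i=1}^k e^{\eta(|\langle f_i,h_i\rangle|^2+C^2_{k,n}-n^2/k^2)},\]
noting that the shift $-\eta(n^2/k^2-C^2_{k,n})$ on the diagonal terms has been precisely chosen so that, at equality in Proposition~\ref{prop29} together with $|\langle f_i,h_j\rangle|^2\equiv C^2_{k,n}$ off the diagonal, the exponents of the $k(k-1)$ off-diagonal and $k$ diagonal summands all coincide.

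Next, since $\exp$ is convex, AM--GM on these $k^2$ positive terms yields
\[\Phi^\eta_{sum}(G)\;\ge\;k^2\exp\!\left(\frac{\eta}{k^2}\Bigl[\sum_{i,j}|\langle f_i,h_j\rangle|^2 +k\bigl(C^2_{k,n}-n^2/k^2\bigr)\Bigr]\right)=k^2\exp\!\left(\frac{\eta}{k^2}\bigl[P_F(H)+kC^2_{k,n}-n^2/k\bigr]\right).\]
Invoking $P_F(H)\ge n$ from Theorem~\ref{mainresult} and unpacking $C^2_{k,n}=\tfrac{n(k-n)}{k^2(k-1)}$, a routine algebraic simplification (observing that $\tfrac{n}{k^2}-\tfrac{n^2}{k^3}=\tfrac{n(k-n)}{k^3}$, so that the bracketed exponent collapses to $\tfrac{n(k-n)}{k^2(k-1)}=C^2_{k,n}$) produces exactly the claimed lower bound $k^2 e^{\eta(n/k^2-n^2/k^3+n(k-n)/(k^3(k-1)))}$.

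For the equality analysis, both inequalities in the chain must be tight simultaneously. Equality in $P_F(H)\ge n$ forces $H=\tilde F$ by Theorem~\ref{mainresult}, giving condition (i). AM--GM equality forces all $k^2$ exponentials to agree, which means $|\langle f_i,h_j\rangle|^2$ is constant in $(i,j)$ for $i\ne j$ (condition (ii)) and $|\langle f_i,h_i\rangle|^2$ is constant in $i$. To promote the latter to condition (iii), I would use that under (i) the cross-Gramian is the self-adjoint idempotent $G=\theta_F S^{-1}\theta_F^*$ (Proposition~\ref{generalresult}), whose diagonal entries are real and lie in $[0,1]$; combined with $\sum_i\langle f_i,h_i\rangle=\operatorname{tr}(G)=n$ and constant modulus, this pins down $\langle f_i,h_i\rangle=n/k$ uniformly, which is (iii).

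Conversely, if (i)--(iii) hold, I would verify directly that $|\langle f_i,h_i\rangle|^2=n^2/k^2$ and, from $P_F(H)=n$, that the common off-diagonal magnitude squared is forced to be $C^2_{k,n}$; then every exponential summand equals $e^{\eta C^2_{k,n}}$ and $\Phi^\eta_{sum}(G)=k^2 e^{\eta C^2_{k,n}}$. The only subtle step is the promotion of ``constant diagonal magnitude'' to ``constant diagonal value'' in the equality discussion, which is why the shift $n^2/k^2-C^2_{k,n}$ in the definition of $\Phi^\eta_{sum}$ is calibrated precisely to force alignment with the canonical-dual projection structure supplied by Propositions~\ref{generalresult} and \ref{prop29}.
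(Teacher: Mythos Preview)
Your argument is correct and follows essentially the same route as the paper: apply Jensen's inequality (equivalently AM--GM on the exponentials) to the $k^2$ summands, then invoke $P_F(H)\ge n$ from Theorem~\ref{mainresult}. In fact your equality analysis is more careful than the paper's, since you explicitly justify the passage from ``constant diagonal magnitude'' to ``constant diagonal value'' via the orthogonal-projection structure of $G$ under condition~(i), a step the paper's proof leaves implicit.
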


\begin{proof}
Applying Jensen's inequality, we have
$$\Phi^{\eta}_{sum}(G) = \sum_{i,j=1}^k e^{\eta |G_{i,j}|^2 - \delta_{i,j} \eta (\frac{n^2}{k^2}-C_{k,n}^2)} \geq k^2 e^{\frac{\eta}{k^2}(\sum_{i,j=1}^k (|G_{i,j}|^2 - \delta_{i,j} (\frac{n^2}{k^2}-C_{k,n}^2)))},$$
with equality if and only if there exists some $C>0$ such that 
$$|G_{i,j}|^2 - \delta_{i,j}(\frac{n^2}{k^2}-C_{k,n}^2) = C$$
for all $1 \leq i,j \leq k$.
Then, since $\sum_{i,j}^k |G_{i,j}|^2$ is equal to the cross-potential of $H$ with respect to $F$, we have by Proposition \ref{mainresult}:
$$k^2 e^{\frac{\eta}{k^2}(\sum_{i,j=1}^k (|G_{i,j}|^2 - \delta_{i,j} (\frac{n^2}{k^2}-C_{k,n}^2)))} \geq k^2 e^{\eta (\frac{n}{k^2}-\frac{n^2}{k^3}+\frac{n(k-n)}{k^3(k-1)})},$$
with equality if and only if $H$ is the canonical dual frame for $F$.
\end{proof}

\begin{note}\label{noteonoffdiagdecay}

 In the case of equality in \eqref{expIneq}, the magnitude of every off-diagonal entry of $G= Gr(F, H)$ will be $C_{k,n}$, and the value of every diagonal entry will be $n/k$.

\end{note}
 


Given a frame $F=\{f_i\}_{i=1}^k$  for $\mathbb{F}^n$, with a dual  frame $H=\{h_i\}_{i=1}^k$, we define 
\[A^{\alpha}_i := \sum_{j=1}^k e^{\alpha \vert \langle f_j, h_i \rangle \vert^2}.\] 
If $A^{\alpha}_i=A^{\alpha}_l$ for all  $i$, $l \in \{1,\hdots, k\}$, \color{black}  then we say that 
 the frame pair $(F, H)$ is {\it $\alpha$ co-equipartitioned.}

The frame $F$ and its canonical dual $\tilde{F}$ from Example~\ref{niceEx} \color{red} form  \color{black}  an $\alpha$ co-equipartitioned frame pair
 for all $\alpha>0$ as $A^{\alpha}_i= e^{\frac{4}{9}\alpha} + 2e^{\frac{\alpha}{9}}$ for $i \in I=\{1,2,3\}$. These two frames have another interesting property: 
For every $l,l'$ in the index set $I$, there exists a permutation $\pi$ of $I$ such that 
$$ \vert \langle f_j, \tilde{f}_l\rangle \vert = \vert \langle f_{\pi(j)},\tilde{f}_{l'}\rangle \vert \;\; \text{ for all $j$} \in I.$$ 
If a frame $F$ and its dual $H$ satisfy such a permutation property, \color{red} then  \color{black}  we call the frame pair {\it  co-equidistributed}.  Co-equidistributivity and $\alpha$ co-equipartitioning are  closely related:

\begin{corollary}\label{coequies}
If a frame pair $F$ and $H$ \color{red}  is \color{black} co-equidistributed, \color{red} it  is \color{black}   $\alpha$ co-equipartitioned for all $\alpha>0$. 
\end{corollary}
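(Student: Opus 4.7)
The plan is to fix arbitrary indices $l,l' \in I$ and show that $A^{\alpha}_l = A^{\alpha}_{l'}$; since $l,l'$ are arbitrary, this yields $\alpha$ co-equipartitioning. Co-equidistributivity supplies a permutation $\pi$ of $I$ such that $|\langle f_j, h_l\rangle| = |\langle f_{\pi(j)}, h_{l'}\rangle|$ for every $j\in I$ (reading the symbol $\tilde{f}$ in the definition as the generic dual $h$). The strategy is simply to plug this equality into the defining sum and then re-index by $\pi$.

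Concretely, I would start from
\[
A^{\alpha}_l \;=\; \sum_{j=1}^k e^{\alpha |\langle f_j,h_l\rangle|^2}
\;=\; \sum_{j=1}^k e^{\alpha |\langle f_{\pi(j)},h_{l'}\rangle|^2},
\]
where the second equality uses the pointwise identity from co-equidistributivity together with the fact that the exponential depends only on the modulus squared. Since $\pi:I\to I$ is a bijection, substituting $j' = \pi(j)$ permutes the terms of the sum without changing its value, giving $\sum_{j'=1}^k e^{\alpha |\langle f_{j'},h_{l'}\rangle|^2} = A^{\alpha}_{l'}$. Thus $A^{\alpha}_l = A^{\alpha}_{l'}$ for all $l,l'\in I$ and every $\alpha>0$, which is exactly the conclusion.

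There is no real obstacle here: the argument uses only that a finite sum is invariant under reindexing by a permutation, and that the integrand $t\mapsto e^{\alpha t^2}$ depends only on $|t|$, so the co-equidistributivity hypothesis (an equality of multisets of moduli, matched by $\pi$) transfers verbatim to an equality of sums for every choice of $\alpha$. The only point worth flagging in the write-up is that the permutation $\pi$ is allowed to depend on the pair $(l,l')$, which is harmless because the reindexing is performed for each such pair separately.
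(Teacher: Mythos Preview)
Your argument is correct and follows the same idea as the paper's proof: co-equidistributivity says the multiset of moduli in each column of the cross-Gramian is the same up to a permutation, so the column sums $A^{\alpha}_l$ coincide for every $\alpha>0$. Your write-up is simply a more explicit version of the paper's one-line observation.
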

 \begin{proof}
  If   a frame  $F$ and its dual $H$ are co-equidistributed, then the magnitudes of every column in the cross Gramian $G=Gr(F,H)$  are the same as those
of any other column of $G$, up to permutation. By the definition of $\alpha-$ co-equipartitioning, it
follows that $G$ is $\alpha-$equipartitioned for all $\alpha >0$.
 \end{proof} 
 \subsection{Conjectures}
 We conjecture a result more general than Lemma~\ref{sth} holds:\color{black} 

\begin{conjecture} Let $F$ be a frame for $\mathbb{F}^n$, and let $G$ be one of its dual frames. Then 
\begin{equation}\label{mulowerbound} \mu(Gr(F, G)) \geq \sqrt{\frac{nk-n^2}{k^2(k-1)}}. \end{equation}
\end{conjecture}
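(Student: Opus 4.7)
The plan is to mimic and extend the strategy of Lemma~\ref{sth}, where the conclusion was established under the additional hypothesis $\langle f_i, g_i \rangle = n/k$ for every $i$. Write $M := Gr(F,G)$ for an arbitrary dual frame $G$ of $F$. Three structural facts will drive the argument: $M$ is idempotent ($M^2 = M$, as shown in the proof of Proposition~\ref{generalresult}), its trace equals its rank, $\mathrm{tr}(M) = n$, and Theorem~\ref{mainresult} gives $\|M\|_F^2 = P_F(G) \geq n$.

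I would first unpack the idempotent identity entry-wise. From $(M^2)_{ii} = M_{ii}$, isolating the diagonal term yields
\[
M_{ii}(1 - M_{ii}) \;=\; \sum_{\ell \neq i} M_{i\ell}M_{\ell i}, \qquad 1\leq i\leq k,
\]
and summing over $i$ produces the key identity
\[
n \;-\; \sum_{i=1}^k M_{ii}^2 \;=\; \sum_{i\neq j} M_{ij} M_{ji}.
\]
Cauchy-Schwarz applied to $\sum_i M_{ii} = n$ gives $\sum_i |M_{ii}|^2 \geq n^2/k$, while the triangle inequality together with $|M_{ij}|,|M_{ji}|\leq \mu := \mu(Gr(F,G))$ for $i\neq j$ bounds the right-hand side above by $k(k-1)\mu^2$. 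Under the Lemma~\ref{sth} hypothesis the Cauchy-Schwarz estimate for the diagonal is tight at $n^2/k$, so $|n-\sum_i M_{ii}^2| = n(k-n)/k$ and the desired inequality $\mu^2 \geq n(k-n)/(k^2(k-1)) = C_{k,n}^2$ drops out at once. To reach the general case I would try to couple this chain with the supplementary slack $\|M\|_F^2 - n \geq 0$ supplied by Theorem~\ref{mainresult}, splitting each product $M_{ij}M_{ji}$ into its symmetric and skew pieces and tracking each against the Frobenius norm surplus; alternatively, one may mimic Proposition~\ref{limitprop} by building an off-diagonal exponential potential $\Phi^\eta_{od}(M)$ and lower-bounding it so that its $\eta\to\infty$ asymptote forces $\mu^2\geq C_{k,n}^2$.

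The step I expect to be the main obstacle is controlling the \emph{lopsided-diagonal} regime, where some $M_{ii}=\langle f_i,g_i\rangle$ sits far from the average value $n/k$ -- a situation that arises naturally for non-tight or non-equal-norm frames. In that regime the Cauchy-Schwarz bound $\sum_i |M_{ii}|^2\geq n^2/k$ becomes slack, $|n-\sum_i M_{ii}^2|$ no longer reaches $n(k-n)/k$, and the straightforward averaging above delivers only a bound strictly weaker than $C_{k,n}^2$. Breaking through will likely require either a sharpened rowwise use of the idempotent identity that turns $\sum_{\ell\neq i}M_{i\ell}M_{\ell i}= M_{ii}(1-M_{ii})$ into a quantitative lower bound on $\max_\ell |M_{i\ell}|$ tied to $M_{ii}$, or else a reduction step: given any dual $G$, construct an auxiliary dual $G'$ with $\langle f_i,g'_i\rangle = n/k$ and $\mu(Gr(F,G'))\leq \mu(Gr(F,G))$, so that Lemma~\ref{sth} can be applied to $G'$ and the bound transferred back to $G$.
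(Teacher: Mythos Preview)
The statement is labeled a \emph{Conjecture} in the paper, and the paper gives only a partial argument: it splits into the case $\sum_{i\neq j}|\langle f_i,g_j\rangle|^2 \geq n(k-n)/k$ (its case~(b)), where bounding this off-diagonal sum above by $k(k-1)\mu^2$ yields \eqref{mulowerbound} directly, and the complementary case~(a), which is left open with the remark that the authors' own examples never land there. Your plan takes a somewhat different angle---exploiting the entry-wise idempotent identity $M_{ii}(1-M_{ii})=\sum_{\ell\neq i}M_{i\ell}M_{\ell i}$ and the resulting equality $n-\sum_i M_{ii}^2=\sum_{i\neq j}M_{ij}M_{ji}$, rather than the paper's direct use of $\|M\|_F^2=P_F(G)\geq n$---but, as you correctly anticipate, it stalls at the same obstruction: once the diagonal entries $\langle f_i,g_i\rangle$ drift away from $n/k$, neither chain of inequalities tightens to $C_{k,n}^2$.

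That obstruction is not merely technical; the conjecture is false as stated, so neither your suggested refinements nor the paper's missing case~(a) can be completed. For $n=1$, $k=2$, take $F=\{2,1\}\subset\mathbb{R}$ with canonical dual $\tilde F=\{2/5,1/5\}$; then
\[
Gr(F,\tilde F)=\begin{pmatrix}4/5&2/5\\2/5&1/5\end{pmatrix},\qquad \mu\bigl(Gr(F,\tilde F)\bigr)=\tfrac{2}{5}<\tfrac{1}{2}=\sqrt{\tfrac{nk-n^2}{k^2(k-1)}}.
\]
Here $\sum_{i\neq j}|M_{ij}|^2=8/25<1/2=n-n^2/k$, so this pair sits squarely in the paper's unresolved case~(a), and the diagonal $(4/5,\,1/5)$ is exactly your ``lopsided'' regime. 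The inequality \eqref{mulowerbound} therefore cannot hold without an additional normalization hypothesis such as $\langle f_i,g_i\rangle=n/k$, which is precisely what Lemma~\ref{sth} assumes and the conjecture drops.
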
  
  
We present  a partial proof of the conjecture above.  Note that the case when  $  \sum_{i=1}^k \vert \langle f_i, g_i \rangle \vert^2  = n^2/k$ is covered by Lemma~\ref{sth}.  Now, suppose  $  \sum_{i=1}^k \vert \langle f_i, g_i \rangle \vert^2  > n^2/k$. Given that 
$$ n \leq P_F(G)  =\sum_{i=1}^k \vert \langle f_i, g_i \rangle \vert^2 + \sum_{i=1}^k \sum_{j\neq i}  \vert \langle f_i, g_j \rangle \vert^2,$$ we have two lower bounds for $P_F(G)$:
\[ P_F(G) > \frac{n^2}{k} + \sum_{i=1}^k \sum_{j\neq i}  \vert \langle f_i, g_j \rangle \vert^2  \; \text{ and } \; P_F(G) \geq n.\]
It is either that 
\[ \text{(a)} \; \;  
n > \frac{n^2}{k} + \sum_{j\neq i}  \vert \langle f_i, g_j \rangle \vert^2, \; \text{or}  \;\;\;\; \text{(b) }\; \; 
\frac{n^2}{k} + \sum_{j\neq i}  \vert \langle f_i, g_j \rangle \vert^2 \geq n. \;\;\; \]

Note that in the examples we have added to this paper,   it never happens that  (a) is true. 

Now, if (b) is true then we have $ \displaystyle   \sum_{j\neq i}  \vert \langle f_i, g_j \rangle \vert^2 \geq n - \frac{n^2}{k}.$ 
 We take the maximum in each term in the last inequality and get
\[ k(k-1) \mu^2(Gr(F,G))= \sum_{i=1}^k \sum_{j\neq i} \mu^2(Gr(F,G)) = \sum_{i=1}^k \sum_{j\neq i} \max_{i\neq j} \vert \langle f_i, g_j \rangle \vert^2 \geq n - \frac{n^2}{k},  \] thus 
\[\mu^2(Gr(F,G)) \geq \frac{1}{k(k-1)} \left(  n - \frac{n^2}{k}\right), \] that is,  $\mu^2(Gr(F,G)) \geq \frac{n-n^2/k}{k(k-1)}$ and inequality \eqref{mulowerbound}  follows. 

\vspace{2.3mm}

\color{black}
Note that the frame in Example~\ref{GrassmannEx} forms an {\it exclusive} Grassmannian pair with its canonical dual, that is, its canonical dual is the only dual frame that satisfies \eqref{minmu}, while the frame in Example~\ref{exNonSymmetricGram}  has at least two duals which satisfy \eqref{minmu}. We conclude that only frames with special structure form exclusive Grassmannian pairs and we offer a conjecture:

\begin{conjecture} If a  frame $F$ for $\mathbb{F}^n$ forms an exclusive  Grassmannian pair with one of its duals, then that dual must be the canonical dual frame of $F$.
\end{conjecture}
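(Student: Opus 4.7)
The plan is to prove the conjecture by contradiction: assume $F$ has a dual $H$ forming a Grassmannian pair with $F$ but $H \neq \tilde{F}$ (the canonical dual), and then produce a second dual $H'\neq H$ with $\mu(Gr(F,H'))=\mu(Gr(F,H))$, contradicting exclusivity.

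First I would parametrize the affine space of all duals of $F$ by $H = \tilde{F} + Z$, where $Z$ is an $n\times k$ matrix satisfying $FZ^T=0$ (equivalently, the rows of $Z$ lie in the $(k-n)$-dimensional orthogonal complement of the row space of $F$). Writing $P := Gr(F,\tilde{F})$ and $B := F^T Z$, the cross-Gramian splits as $Gr(F,H)=P+B$. By Proposition~\ref{generalresult} and its proof, $P$ is a self-adjoint idempotent of rank $n$; a short computation using $F\tilde{F}^T=I$ and $FZ^T=0$ then gives $PB=B$ and $BP=0$, and since $Gr(F,H)$ is itself idempotent, $B^2=0$. Thus $B$ is a nilpotent perturbation of a Hermitian projection satisfying specific orthogonality relations, and these algebraic constraints are the backbone of the argument.

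Next I would attempt to construct $H'$ via reflection through the canonical dual: set $H':=2\tilde{F}-H=\tilde{F}-Z$, so $Gr(F,H')=P-B$. For $H'$ to also be a Grassmannian dual of $F$ one needs $|P_{ij}-B_{ij}|=|P_{ij}+B_{ij}|$ at every $(i,j)$ in the active set $A:=\{(i,j):\,i\neq j,\,|Gr(F,H)_{ij}|=\mu(Gr(F,H))\}$, which amounts to $\mathrm{Re}(\overline{P_{ij}}B_{ij})=0$ on $A$. When reflection fails, I would instead search inside the tangent space $\{Z':\,FZ'^T=0\}$ for a direction along which $|P_{ij}+B_{ij}|^2$ is stationary on $A$; convexity of $\mu^2$ (as a maximum of convex quadratics) on the affine space of duals then upgrades any such infinitesimal direction into an entire segment of minimizers through $H$. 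A parallel source of extra minimizers comes from symmetries: any unitary $U$ with $Uf_i=f_{\pi(i)}$ for some permutation $\pi$ fixes $\tilde{F}$ and sends $H$ to a dual with the same $\mu$, so if any symmetry moves $H$ it already breaks exclusivity.

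The main obstacle is the case where reflection fails, the symmetry group acts trivially on $H$, and the active set $A$ looks tight: one must show that whenever $B\neq 0$, the constraints $B^2=0$, $PB=B$, $BP=0$, together with the rank bound $\mathrm{rank}(Z)\leq k-n$, still leave room in the tangent space for a nonzero direction that keeps all active moduli fixed to first order. Closing this case appears to require an identity relating the active entries of $P+B$ to the nilpotent structure of $B$ that I have not been able to pin down in full generality; this is presumably why the statement is posed as a conjecture rather than a theorem.
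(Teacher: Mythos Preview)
The paper does not prove this statement: it is listed explicitly as a conjecture in Subsection~4.2, with no proof or proof sketch offered. There is therefore no argument in the paper to compare your attempt against; the authors only motivate the conjecture via Examples~\ref{exNonSymmetricGram} and~\ref{GrassmannEx}.

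On the substance of your attempt: the algebraic setup is correct and useful. Writing an arbitrary dual as $H=\tilde F+Z$ with $FZ^{T}=0$ and setting $P=F^{T}\tilde F$, $B=F^{T}Z$, one indeed gets $Gr(F,H)=P+B$ with $PB=B$, $BP=0$, $B^{2}=0$, and the reflection $H'=2\tilde F-H$ is a genuinely different dual whenever $H\neq\tilde F$. However, the step ``convexity of $\mu^{2}$ \dots\ then upgrades any such infinitesimal direction into an entire segment of minimizers'' is not valid as stated. Along a line $t\mapsto Z_{0}+tZ'$, each active term has the form $|c_{ij}+t\,d_{ij}|^{2}$ with $d_{ij}=(F^{T}Z')_{ij}$; first-order stationarity at $t=0$ only kills the linear coefficient $\mathrm{Re}(\overline{c_{ij}}d_{ij})$, not the quadratic coefficient $|d_{ij}|^{2}$, so the maximum can still increase strictly for $t\neq 0$. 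To obtain a genuine segment of minimizers you would need $d_{ij}=0$ on the active set \emph{and} control of the inactive entries, which is a much stronger requirement than stationarity. You correctly flag at the end that the remaining case is not closed; that residual gap is precisely why the paper records the statement as a conjecture.
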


Example~\ref{niceEx} and Example~\ref{GrassmannEx} indicate  the following to be true:

\begin{conjecture}
 A frame $F$ forms  an exclusive Grassmannian pair with a dual frame $\tilde{F}$ if and only if the frame pair $(F,\tilde{F})$  is co-equidistributed. When this is true, the dual frame $\tilde{F}$ is the canonical dual of $F$.

\end{conjecture}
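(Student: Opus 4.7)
I would treat the biconditional one direction at a time, and show that in either case the dual must be the canonical dual.

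\emph{Co-equidistributed $\Longrightarrow$ exclusive Grassmannian pair, canonical.} Assume $(F,\tilde F)$ is co-equidistributed, and set $G = Gr(F,\tilde F)$. Co-equidistributivity means every column of $G$ has the same multiset of magnitudes. In particular the diagonal entry $\langle f_i,\tilde f_i\rangle$ appears as some magnitude in every other column. Combined with the trace identity $\sum_i \langle f_i,\tilde f_i\rangle = n$ (which holds for every dual, by \eqref{projectionS} and \eqref{rankS}), a bookkeeping argument on the column multisets should force $\langle f_i,\tilde f_i\rangle = n/k$ for all $i$ and, via Lemma~\ref{sth}, pin down $\tilde F$ as the canonical dual of $F$. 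With $\tilde F$ canonical, $G$ is self-adjoint and idempotent, and by co-equidistributivity $\mu(G)$ is attained off-diagonally in every column. A perturbation $H = \tilde F + V$ with $FV^T = 0$ would have to strictly decrease $|G_{i,j} + \langle f_i, v_j\rangle|$ at one distinguished position per column simultaneously; using the convexity of $\mu$ on the affine slice of dual frames, I would argue that the resulting linear system of $k$ strict inequalities admits only $V=0$, which gives exclusivity.

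\emph{Exclusive Grassmannian pair $\Longrightarrow$ co-equidistributed, canonical.} Assume $\tilde F$ is the unique dual minimizing $\mu$. Let $\Sigma$ be the group of pairs $(U,\sigma)$, where $U$ is a unitary on $\mathbb{F}^n$ and $\sigma$ a permutation of $\{1,\ldots,k\}$ satisfying $Uf_i = f_{\sigma(i)}$. For any $(U,\sigma) \in \Sigma$, the frame $H = \{U\tilde f_{\sigma^{-1}(i)}\}$ is another dual of $F$ whose cross Gramian with $F$ is obtained from $Gr(F,\tilde F)$ by permuting rows and columns and multiplying by a unitary, hence has the same entrywise magnitudes, and in particular the same $\mu$. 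By exclusivity $H = \tilde F$, so $\tilde F$ is $\Sigma$-equivariant. Since the canonical dual is automatically $\Sigma$-equivariant, I would use a dimension count on the $\Sigma$-fixed subspace of the affine variety of dual frames to conclude $\tilde F = \tilde F_c$, and then deduce co-equidistributivity from the transitive action of $\Sigma$ on the indices.

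The main obstacle is the last step in each direction. In the forward direction, verifying that $k$ simultaneous strict inequalities on the affine subspace $\{V : FV^T = 0\}$ have no nontrivial solution requires controlling exactly how the maximum-attaining positions of $G$ interact with the columns of $F$; this is a transversality condition that will need explicit rank bookkeeping. In the reverse direction, the appeal to $\Sigma$ implicitly assumes the symmetry group is rich enough (in particular, transitive on $\{1,\ldots,k\}$) to realize the required column permutations, which is unlikely to hold for a general frame. A proof that avoids this assumption would have to construct dual-preserving perturbations of $\tilde F$ \emph{directly} from any failure of co-equidistributivity and show that each such perturbation either preserves $\mu$ (contradicting exclusivity) or already forced $\tilde F$ to be canonical with symmetric columns to begin with. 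Producing these perturbations canonically appears to be the genuinely hard part of the conjecture.
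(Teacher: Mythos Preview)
The statement you are attempting to prove is listed in the paper as a \emph{conjecture}, not a theorem. The authors offer no proof; they merely remark that Examples~\ref{niceEx} and~\ref{GrassmannEx} ``indicate the following to be true'' and leave the statement open alongside two other conjectures in the same subsection. There is therefore nothing in the paper to compare your argument against.

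As for the proposal itself, the obstacles you flag in your last paragraph are real, and there are additional gaps earlier. In the forward direction, the step from co-equidistributivity plus the trace identity to $\langle f_i,\tilde f_i\rangle = n/k$ for all $i$ is not justified: the columns of $G$ sharing a common magnitude multiset does not force the diagonal entries to coincide, since the diagonal entry of column $i$ sits in row $i$, a different position in each column. Even granting equal diagonal, your appeal to Lemma~\ref{sth} does not pin down $\tilde F$ as canonical; that lemma asserts equality in the $\mu$-bound \emph{when} $G$ is the canonical dual, not \emph{only when}. The exclusivity step via convexity of $\mu$ is also incomplete: $\mu$ is indeed convex on the affine space of duals, but convex functions may have non-unique minimizers, and Example~\ref{exNonSymmetricGram} already exhibits a frame with two distinct duals attaining the minimal $\mu$.

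In the reverse direction, your symmetry-group idea is natural but, as you correctly note, requires $\Sigma$ to act transitively on $\{1,\ldots,k\}$, and a generic frame has trivial $\Sigma$. Without transitivity neither the conclusion that $\tilde F$ is canonical nor co-equidistributivity follows. Your closing diagnosis is accurate: producing the right perturbations directly from a failure of co-equidistributivity is the crux, and it is not supplied here. The conjecture remains open.
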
 


\section{Cross  fusion frame potential}\label{CFFPdef}
We propose a definition for the cross fusion frame potential and 
 compute its value for \color{black} special classes of fusion frames.
\begin{definition} 
Let $P = \{P_i\}_{i=1}^k$ and $Q = \{Q_{i}\}_{i=1}^k$ be fusion frames for $\mathbb{F}^n$, where $P_i$ and $Q_i$ are orthogonal projections. Then the cross fusion frame potential   of $Q$ with respect to $P$ is 
$$\phi(P,Q) := \sum_{i=1}^k\sum_{j=1}^kTr(P_{i}Q_{j}).$$
\end{definition}

The trace of a matrix product is independent of the order of the matrix product, thus  $\phi(P,Q) = \phi(Q,  P)$. 
Just as the fusion frame potential equals the trace of the square of the frame operator, here we have $$\phi(P,Q) = Tr(S_PS_Q),$$ where $S_P$ is the fusion frame operator for $P$ and $S_Q$ is the fusion frame operator for $Q$: 
\begin{align*}  \phi(P,Q) &= \sum_{i=1}^k\sum_{j=1}^kTr(P_iQ_j) = Tr\left(\sum_{i=1}^k\sum_{j=1}^kP_iQ_j\right)  = Tr\left(\sum_{i=1}^kP_i\left(\sum_{j=1}^kQ_j\right)\right)\\ & = Tr\left(\sum_{i=1}P_iS_Q\right) = Tr\left(\left(\sum_{i=1}^kP_i\right)S_Q\right) = Tr(S_PS_Q).\end{align*}

\begin{definition} \label{semi_orth_def}
Let $W_1, W_2$ be subspaces such that 
\begin{itemize}
    \item[i.] $W_1 \cap W_2 \neq \emptyset$

    \item[ii.] there exists a non-trivial subspace $V_1 \subset W_1$ such that $V_1$ is orthogonal to $W_2$ and $$dim(W_1 \cap W_2) + dim(V_1) = dim(W_1)$$
     
    \item[iii.] there exists a non-trivial subspace $V_2 \subset W_2$ such that $V_2$ is orthogonal to $W_1$ and $$dim(W_1 \cap W_2) + dim(V_2) = dim(W_2).$$
\end{itemize}
Then we say that $W_1$ and $W_2$ are \textit{semi-orthogonal subspaces}.
 
\end{definition}
For instance, the coordinate planes $xy$ and $yz$ are semi-orthogonal in $\mathbb{R}^3$.
Observe that for two semi-orthogonal subspaces $W_1$ and $W_2$ we have: \[P_{W_1} P_{W_2} W_1 =P_{W_2} P_{W_1} W_2 = W_1 \cap W_2.\]

When a fusion frame $P$ has special geometry, then $P$ is its  canonical dual, and we can specify more closely the value of the cross potential:

\begin{theorem} \label{fusion_result}
Let $P = \{W_i\}_{i=1}^k$ be a fusion frame for an $n$-dimensional Hilbert space $\mathbb{F}^n$ such that for every $1 \leq i,j \leq k$ with $i \neq j$, one of the following conditions holds: 

\begin{itemize} 
\item[i.] $W_i = W_j$ 

\item[ii.] $W_i$ and $W_j$ are orthogonal to one another, or

\item[iii.] $W_i$ and $W_j$ are semi-orthogonal to one another.

\end{itemize}
Let $Q$ be the canonical dual of $P$. Then $Q = P$ and 
$$\phi(P,Q)  = FFP_P = \sum_{i=1}^k\sum_{j=1}^kdim(W_i \cap W_j).$$
 
\end{theorem}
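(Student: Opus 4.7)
The plan is to verify the two claims in succession: first show that the canonical dual $Q$ coincides with $P$, and then compute $\phi(P,Q)$ term-by-term using the three possible relationships between $W_i$ and $W_j$.

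To show $Q = P$, recall from the discussion preceding this theorem that the canonical dual fusion frame consists of the subspaces $\{S^{-1}W_i\}_{i=1}^k$, where $S = \sum_{j=1}^k P_j$ is the fusion frame operator. It therefore suffices to prove that $S$ leaves each $W_i$ invariant; combined with invertibility of $S$ on the finite-dimensional space $\mathbb{F}^n$, this forces $S|_{W_i}$ to be a bijection on $W_i$ and hence $S^{-1}W_i = W_i$. The key step is thus to show $P_j W_i \subseteq W_i$ for all $i,j$. This is immediate when $W_j = W_i$ (then $P_jW_i = W_i$) or when $W_j \perp W_i$ (then $P_jW_i = \{0\} \subseteq W_i$). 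For the semi-orthogonal case, I use the decomposition $W_i = (W_i \cap W_j) \oplus V_i$ from Definition~\ref{semi_orth_def}, where $V_i \perp W_j$: for any $x = a + b$ with $a \in W_i \cap W_j$ and $b \in V_i$, one has $P_j x = P_j a + P_j b = a + 0 \in W_i \cap W_j \subseteq W_i$.

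Once we have $Q = P$, so that $Q_i = P_i$ for each $i$, the problem reduces to verifying $Tr(P_i P_j) = \dim(W_i \cap W_j)$ for every $i,j$; summing over all pairs then yields both equalities in the conclusion. When $W_i = W_j$, $P_i P_j = P_i$ and $Tr(P_i) = \dim W_i = \dim(W_i \cap W_j)$. When $W_i \perp W_j$, $P_i P_j = 0$ and $\dim(W_i \cap W_j) = 0$. For the semi-orthogonal case I proceed by decomposing $\mathbb{F}^n$ as the orthogonal direct sum $V_i \oplus (W_i \cap W_j) \oplus V_j \oplus R$, where $R$ is the orthogonal complement of the first three summands; evaluating $P_i P_j$ against this decomposition shows that $P_i P_j = P_j P_i$ equals the orthogonal projection onto $W_i \cap W_j$, whose trace is exactly $\dim(W_i \cap W_j)$.

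The main obstacle is the semi-orthogonal case: both the invariance $P_j W_i \subseteq W_i$ and the identity $P_i P_j = P_j P_i = $ projection onto $W_i \cap W_j$ rely on carefully unpacking the three orthogonality requirements in Definition~\ref{semi_orth_def}, whereas the other two cases are routine. The diagonal terms $i=j$ fall under case (i) and contribute $Tr(P_i) = \dim W_i = \dim(W_i \cap W_i)$, consistent with the stated formula.
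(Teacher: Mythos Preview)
Your proposal is correct and follows essentially the same two-step structure as the paper's proof: first establish $S^{-1}W_i = W_i$ by a case analysis on the three pairwise relations, then compute $Tr(P_iP_j)=\dim(W_i\cap W_j)$ in each case, concluding in the semi-orthogonal case that $P_iP_j$ is the orthogonal projection onto $W_i\cap W_j$. Your invariance argument $P_jW_i\subseteq W_i$ (via the orthogonal splitting $W_i=(W_i\cap W_j)\oplus V_i$) is in fact a bit cleaner than the paper's version, which argues that each frame vector $f\in W_i$ satisfies $Sf=mf$---a claim that only holds if the frame for $W_i$ is chosen adapted to the splitting, a point the paper leaves implicit.
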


\begin{proof}
Let $P$ be as stated and let $Q$ be its canonical dual. Consider some $W_i$ in $P$. Let $F_i$ be a frame for $W_i$, and let $f \in F_i$. We consider what happens when we apply each $P_j$ to $f$ ($P_j$ is the orthogonal projection onto $W_j$ in $P$). 

If $W_i = W_j$, then $P_jf = f$. If $W_i$ and $W_j$ are orthogonal to one another, then $P_jf$ is the zero vector. Finally, if $W_i$ and $W_j$ are semi-orthogonal to one another, then $P_jf$ is equal to $f$ if $f \in W_j$ or the zero vector if $f \not\in W_j$. Thus, if $S = \{P_j\}_{j=1}^k$ is the frame operator for $P$, then $Sf = mf$, where $m > 0$ is an integer. Then $S^{-1}f = \frac{1}{m}f$. Then $S^{-1}W_i = W_i$, and so $Q = P$.

Now let $P_i$ be the orthogonal projection onto a subspace $W_i$ in $P$, and consider the trace of each product $P_iP_j$, when $1 \leq j \leq k$. If $W_i = W_j$, then $P_iP_j = P_i$, and $Tr(P_i) = dim(W_i)$. If $W_i$ and $W_j$ are orthogonal to one another, then $P_iP_j$ contains only zeros, and so $Tr(P_iP_j) = 0$. Finally, if $W_i$ and $W_j$ are semi-orthogonal to one another, then $P_iP_j$ is the orthogonal projection onto $W_i \cap W_j$, and so $Tr(P_iP_j) = dim(W_i \cap W_j)$. The result follows.
\end{proof}

A special class of the type of fusion frame described above are what are called \textit{orthonormal fusion bases} for $\mathbb{F}^n$, where $\mathbb{F}^n$ is an orthogonal sum of the subspaces $W_i$ in the orthonormal fusion basis. 

\begin{corollary} \label{ofb_corollary}
Let $P = \{W_i\}_{i=1}^k$ be an orthonormal fusion basis for $\mathbb{F}^n$, and let $Q$ be the canonical dual for $P$. Then 
$$ \phi(P,Q)  = n.$$
\end{corollary}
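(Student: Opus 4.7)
The plan is to derive this as an immediate specialization of Theorem~\ref{fusion_result}. An orthonormal fusion basis consists of mutually orthogonal subspaces $W_1,\dots,W_k$ whose orthogonal direct sum is all of $\mathbb{F}^n$, so in particular the pairwise condition (ii) of Theorem~\ref{fusion_result} holds for every $i\neq j$ (conditions (i) and (iii) are simply not needed here).

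Invoking Theorem~\ref{fusion_result} directly, we conclude first that $Q=P$ (the canonical dual of an orthonormal fusion basis is itself), and second that
\[
\phi(P,Q) \;=\; \sum_{i=1}^k\sum_{j=1}^k \dim(W_i\cap W_j).
\]
The key observation is then that mutual orthogonality of the $W_i$ forces $W_i\cap W_j=\{0\}$ whenever $i\neq j$, so all off-diagonal terms in this double sum vanish. Only the diagonal terms $\dim(W_i\cap W_i)=\dim(W_i)$ survive.

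Finally, because $\mathbb{F}^n$ is the orthogonal direct sum of the $W_i$, we have $\sum_{i=1}^k \dim(W_i)=n$, which gives $\phi(P,Q)=n$ as claimed.

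There is no real obstacle: the substantive content is packaged inside Theorem~\ref{fusion_result}, and the corollary is essentially a dimension-count once one notes that pairwise orthogonality annihilates the off-diagonal intersections and that an orthonormal fusion basis partitions the dimension of the ambient space.
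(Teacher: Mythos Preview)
Your proof is correct and follows essentially the same route as the paper: invoke Theorem~\ref{fusion_result} (via the orthogonality condition) to get $Q=P$ and the double-sum formula, then observe that orthogonality kills the off-diagonal intersections and that the remaining diagonal dimensions sum to $n$.
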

\begin{proof}
By Theorem \ref{fusion_result}, we know that the canonical dual $Q$ of $P$ is itself. Since $P$ is an orthogonal fusion basis, whenever $i \neq j$, we have $dim(W_i \cap W_j) = 0$. Thus, $\phi(P,Q) = \sum_{i=1}^kdim(W_i)$. Since $\mathbb{F}^n$ is an orthogonal sum of the subspaces in $P$, the sum of the dimensions of the subspaces is equal to the dimension of the space, $n$.
\end{proof}

We give three examples of computing the cross potential of the canonical dual fusion frame. In the first example, the frame is an orthonormal fusion basis with itself as its canonical dual. In the second, the frame is not an orthonormal fusion basis, but it is a frame of the type described in Theorem \ref{fusion_result}, and so its canonical dual is also itself. Finally, the frame in the third example is not of the type described in Theorem \ref{fusion_result}, and so its canonical dual is not itself, and the cross potential value does not match the value in Theorem \ref{fusion_result}.
\begin{example}
In $\mathbb{R}^3$, let $W_1$ be the $xy$-plane, and let $W_2$ be the $z$-axis. Then $P = \{W_1,W_2\}$ is an orthonormal fusion basis.

The corresponding orthogonal projections are $$P_1 = \begin{bmatrix}
1 & 0 & 0 \\
0 & 1 & 0 \\
0 & 0 & 0
\end{bmatrix}, P_2 = \begin{bmatrix}
0 & 0 & 0 \\
0 & 0 & 0 \\
0 & 0 & 1
\end{bmatrix}.$$
Then the fusion frame operator $S = P_1 + P_2$ is the identity operator $I$, and so $S^{-1}=I$. 
Thus  $S^{-1}f_{i,j}=f_{i,j}$, where $f_{1,i}$ is a frame element of  any frame $F_1$ for $W_1$,  and $f_{2,i}$ is a frame element of  any frame $F_2$ for $W_2$. 
Therefore, our canonical dual fusion frame $Q$ for $P$ is the same as $P$, and so 
$$\phi(P,Q)  = FFP(P) = 3.$$ 
\end{example}

In the following example, $P$ is not an orthonormal fusion basis, but it is of the structure stated in Theorem  \ref{fusion_result}, so $P$ is still its own canonical dual, and the cross potential value will be as stated in Theorem \ref{fusion_result}.
\begin{example}
In $\mathbb{R}^3$, let $W_1$ be the $xy$-plane, and let $W_2$ be the plane given by the equation $x + y = 0$. Then $P = \{W_1,W_2\}$ is a fusion frame for $\mathbb{R}^3$. The corresponding orthogonal projections are $$P_1 = \begin{bmatrix}
1 & 0 & 0 \\
0 & 1 & 0 \\
0 & 0 & 0 
\end{bmatrix}, 
P_2 = \begin{bmatrix}
1/2 & -1/2 & 0 \\
-1/2 & 1/2 & 0 \\
0 & 0 & 1
\end{bmatrix}.$$
The fusion frame operator for $P$ is 
$$S = P_1 + P_2 = \begin{bmatrix}
3/2 & -1/2 & 0 \\
-1/2 & 3/2 & 0 \\
0 & 0 & 1
\end{bmatrix}, \; \text{and so} \; S^{-1} = \begin{bmatrix}
3/4 & 1/4 & 0 \\
1/4 & 3/4 & 0 \\
0 & 0 & 1 
\end{bmatrix}.$$
We now create bases for each $W_i$: 
$$F_1 = \Bigg\{ \begin{bmatrix}
1 \\
1 \\
0
\end{bmatrix}, \begin{bmatrix}
1 \\
-1 \\
0
\end{bmatrix} \Bigg\}, F_2 = \Bigg\{ \begin{bmatrix}
1 \\
-1 \\
0
\end{bmatrix}, \begin{bmatrix}
0 \\
0 \\
1
\end{bmatrix}\Bigg\}.$$
We apply $S^{-1}$ to each vector in $F_1$ and $F_2$ to get frames $G_1$ and $G_2$ for our canonical dual subspaces: 
$$G_1 = \Bigg\{ \begin{bmatrix}
1 \\
1 \\
0
\end{bmatrix}, \begin{bmatrix}
1/2 \\
-1/2\\
0
\end{bmatrix} \Bigg\}, 
 G_2 = \Bigg\{ \begin{bmatrix}
1/2 \\
-1/2 \\
0
\end{bmatrix}, \begin{bmatrix}
0 \\
0 \\
1
\end{bmatrix} \Bigg\}.$$
$G_1$ spans the $xy$-plane, and $G_2$ spans the plane $x + y = 0$, so our canonical dual fusion frame $Q$ for $P$ is the same as $P$. Thus $\phi(P,Q)  = FFP_P(P) = 6$. 

Note that while the cross potential and regular potential share the same value in this example, that value is not the minimal value for $FFP(P)$, which is $\frac{1}{n}\left(\sum_{i=1}^kL_i\right)^2 = \frac{1}{3}(2+2)^2 = \frac{16}{3}$. This tells us that $P$ is non-tight. 
\end{example}

Finally, we give an example of a fusion frame where Theorem \ref{fusion_result} does not apply. Here the canonical dual of our fusion frame $P$ is \textit{not} equal to $P$.


\begin{example}
In $\mathbb{R}^3$, let $W_1$ be the $xy$-plane, and let $W_2$ be the plane $y = z$. Then $P = \{W_1,W_2\}$ is a fusion frame, and the corresponding orthogonal projections are:
$$P_1 = \begin{bmatrix}
1 & 0 & 0 \\
0 & 1 & 0 \\
0 & 0 & 0
\end{bmatrix}, 
P_2 = \begin{bmatrix}
1 & 0 & 0 \\
0 & 1/2 & 1/2 \\
0 & 1/2 & 1/2
\end{bmatrix}.$$ 
Then our fusion frame operator and its inverse are:
$$S = P_1 + P_2 = \begin{bmatrix}
2 & 0 & 0 \\
0 & 3/2 & 1/2 \\
0 & 1/2 & 1/2
\end{bmatrix}, \;\; 
S^{-1} = \begin{bmatrix}
1/2 & 0 & 0 \\
0 & 1 & -1 \\
0 & -1 & 3
\end{bmatrix}.$$
We now choose bases for our subspaces:
$$F_1 = \Bigg\{ \begin{bmatrix}
1 \\
0 \\
0
\end{bmatrix}, \begin{bmatrix}
0 \\
1 \\
0
\end{bmatrix} \Bigg\}, 
F_2 = \Bigg\{ \begin{bmatrix}
0 \\
1 \\
1 
\end{bmatrix}, \begin{bmatrix}
1 \\
0 \\
0
\end{bmatrix} \Bigg\}.$$
Then we apply $S^{-1}$ to each of the vectors in our frames to obtain frames $G_1$ and $G_2$ for the subspaces of our canonical dual:
$$G_1 = \Bigg\{ \begin{bmatrix}
1/2 \\
0 \\
0 
\end{bmatrix}, \begin{bmatrix}
0 \\
1 \\
-1
\end{bmatrix} \Bigg\},
G_2 = \Bigg\{ \begin{bmatrix}
0 \\
0 \\
2
\end{bmatrix}, \begin{bmatrix}
1/2 \\
0 \\
0
\end{bmatrix} \Bigg\}.$$
$G_1$ spans the plane $y+z = 0$, and $G_2$ spans the plane $y = 0$, so the orthogonal projections of our canonical dual frame $Q$ are:
$$Q_1 = \begin{bmatrix}
1 & 0 & 0 \\
0 & 1/2 & -1/2 \\
0 & -1/2 & 1/2
\end{bmatrix}, 
Q_2 = \begin{bmatrix}
1 & 0 & 0 \\
0 & 0 & 0 \\
0 & 0 & 1
\end{bmatrix}.$$
Then $\phi(P,Q)  = 5$. Note that $FFP(P) = 7$, and the minimum value of $FFP(P)$ for a fusion frame for $\mathbb{R}^3$ with 2 subspaces of dimension $2$ is $16/3$. Thus the cross potential of $Q$ with respect to $P$ is not equal to the frame potential of $P$ by itself, and the cross potential is less than the minimum frame potential value.




\color{black}
\end{example}
 
\begin{proposition}
Let $P = \{W_i\}_{i=1}^k$ and $Q = \{V_i\}_{i=1}^k$ be fusion frames for $\mathbb{F}^n$, and let $U:\mathbb{F}^n \rightarrow \mathbb{F}^n$ be a unitary or orthogonal operator. Then \begin{center} $UP = \{UW_i\}_{i=1}^k$ and $UQ = \{UV_i\}_{i=1}^k$ \end{center} are also fusion frames for $\mathbb{F}^n$, and $\phi(P,Q)  = \phi(UP,UQ)$. 
\end{proposition}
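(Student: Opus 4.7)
The plan is to exploit the identity $\phi(P,Q)=\mathrm{Tr}(S_P S_Q)$ established just before the theorem, together with the fact that the orthogonal projection onto $UW_i$ is precisely $UP_iU^{*}$ whenever $P_i$ is the orthogonal projection onto $W_i$. Once this identity for the conjugated projection is in hand, the invariance of $\phi$ under $U$ reduces to the cyclic property of the trace and the unitarity of $U$.

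First I would verify that $UP$ and $UQ$ are fusion frames for $\mathbb{F}^n$. Since $U$ is bijective and norm-preserving, for any $g\in\mathbb{F}^n$ write $g=Uf$; then $\|g\|=\|f\|$, and using $UP_iU^{*}$ for the projection onto $UW_i$ gives $\|UP_iU^{*}g\|=\|UP_if\|=\|P_if\|$. Substituting into Definition~\ref{fusion_def} shows that the fusion frame inequalities for $P$ transfer verbatim to $UP$ with the same bounds $A,B$, and analogously for $UQ$.

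Next I would show that the orthogonal projection onto $UW_i$ equals $UP_iU^{*}$: this operator is self-adjoint, idempotent (since $U^{*}U=I$), has range $UW_i$, and kernel $U(W_i^{\perp})=(UW_i)^{\perp}$. With that, the main computation is short:
\begin{align*}
\phi(UP,UQ)
&=\sum_{i=1}^{k}\sum_{j=1}^{k}\mathrm{Tr}\bigl((UP_iU^{*})(UQ_jU^{*})\bigr)\\
&=\sum_{i=1}^{k}\sum_{j=1}^{k}\mathrm{Tr}(UP_iQ_jU^{*})
=\sum_{i=1}^{k}\sum_{j=1}^{k}\mathrm{Tr}(P_iQ_jU^{*}U)
=\phi(P,Q),
\end{align*}
where the middle step uses $U^{*}U=I$ and the last step uses the cyclic property of the trace.

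There is no real obstacle here; the only care point is correctly identifying $UP_iU^{*}$ (and not, say, $UP_i$) as the projection onto $UW_i$. Once that lemma is stated, the rest is bookkeeping with the trace.
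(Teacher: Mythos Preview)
Your proposal is correct and follows essentially the same route as the paper: identify the orthogonal projection onto $UW_i$ as $UP_iU^{*}$, transfer the fusion frame bounds via the norm-preserving property of $U$, and finish with the cyclic property of the trace. The only cosmetic difference is that the paper justifies $UP_iU^{*}$ via an orthonormal-basis matrix factorization $P_i=AA^{*}$, whereas you check self-adjointness, idempotence, and range directly; both are standard.
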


\begin{proof}
Let $\{P_i\}_{i=1}^k$ be the associated orthogonal projections for $P$, and let $\{Q_i\}_{i=1}^k$ be the associated orthogonal projections for $Q$. Fix $1 \leq i,j \leq k$. Then $P_i$ can be written as $AA^*$, where $A$ is a matrix whose columns form an orthonormal basis for $W_i$. Then the columns of $UA$ will form an orthonormal basis for $UW_i$, since $U$ preserves inner products. Thus the orthogonal projection in $\mathbb{F}^n$ onto $UW_i$ will be $(UA)(UA)^* = UP_iU^*$. Similarly, the orthogonal projection onto $V_j$ will be $(UB)(UB)^* = UQ_iU^*$, where $B$ is a matrix whose columns form an orthonormal basis for $V_j$.

First we show that $UP$ is a fusion frame. Let $0 < A,B < \infty$ be constants that satisfy the inequality in Definition \ref{fusion_def} for $P$, and let $f \in \mathbb{F}^n$. Then since $U^*f \in \mathbb{F}^n$, 
$$A\lVert U^*f \rVert ^2 \leq \sum_{i=1}^k \lVert P_iU^*f \rVert ^2 \leq B \lVert U^*f \rVert ^2.$$ As $U$ and $U^*$ preserve inner products: 
$A \lVert f \rVert ^2 \leq \sum_{i=1}^k \lVert UP_iU^*f \rVert ^2 \leq B \lVert f \rVert ^2.$ 

Since $UP_iU^*$ is the orthogonal projection onto $UW_i$, we know that $UP$ is a fusion frame. Similarly, we can show that $UQ$ is a fusion frame.

Now consider the product between the orthogonal projections onto $UW_i$ and $UV_j$:
$$(UP_iU^*)(UQ_jU^*) = UP_i(U^*U)Q_jU^* = UP_iQ_jU^*.$$
  \begin{align*}
\text{Then}\;\;  Tr((UP_iU^*)(UQ_jU^*)) &= Tr((UP_i)(Q_jU^*)) \\
 &= Tr((Q_jU^*)(UP_i)) = Tr(Q_jP_i) = Tr(P_iQ_j),    
\end{align*} and so the cross-frame potential values will be equivalent. 
\end{proof}
 
\vspace{-4.3mm}

\vspace{3.6 mm}


\end{document}